\newif\ifdraft \draftfalse
\newtheorem{theorem}{Theorem}[section]
\def\thmhead@plain#1#2#3{%
  \thmname{#1}\thmnumber{\@ifnotempty{#1}{ }\@upn{#2}}%
  \thmnote{ {\the\thm@notefont [#3]}}}
\let\thmhead\thmhead@plain
\newtheorem{definition}[theorem]{Definition}
\newtheorem{lemma}[theorem]{Lemma}
\newtheorem{assumpt}[theorem]{Assumption}
\newcommand\R{\mathbb{R}}
\newcommand{\cD}{\mathcal{D}}
\newcommand{\cM}{\mathcal{M}}
\newcommand{\cX}{\mathcal{X}}
\newcommand{\cY}{\mathcal{Y}}
\newcommand{\bU}{\pmb{U}}
\newcommand{\bV}{\pmb{V}}
\newcommand{\bX}{\pmb{X}}
\newcommand{\bZ}{\pmb{Z}}
\newcommand{\bH}{\pmb{H}}
\newcommand{\nullhyp}{\mathrm{H}_0}
\newcommand{\althyp}{\mathrm{H}_1}
\newcommand{\bbe}{\pmb{e}}
\newcommand{\bbx}{\pmb{x}}
\newcommand{\bbz}{\pmb{z}}
\newcommand{\bp}{\pmb{p}}
\newcommand{\nullbp}{\bp^0}
\newcommand{\nullp}{p^0}
\newcommand{\mult}{\mathrm{Multinomial}}
\newcommand{\diag}{\mathrm{Diag}}
\newcommand{\Normal}[2]{\mathrm{N}\left(#1,#2 \right)}
\newcommand{\chisqGLRV}[1]{\tilde{\mathrm{T}}\left(#1 \right)}
\newcommand{\chisqKR}[1]{{\mathrm{T}}_{KR}^{(n)}\left(#1 \right)}
\newcommand{\bchisqKR}[1]{\pmb{\hat{\mathrm{T}}}^{(n)}_{KR}\left(#1 \right)}
\newcommand{\transpose}{\intercal}
\newcommand{\multRR}{\alg_\mathtt{EXP}}
\newcommand{\btheta}{\pmb{\theta}}
\newcommand{\bDelta}{\pmb{\Delta}}
\newcommand{\hbpipi}[1]{\hat{\pmb\pi}^{(#1)}}
\newcommand{\tbpipi}[1]{\tilde{\pmb\pi}^{(#1)}}
\newcommand{\bpipi}[1]{\pmb{\pi}^{(#1)}}
\newcommand{\cbpipi}[1]{\check{\pmb{\pi}}^{(#1)}}
\newcommand{\stat}{\mathrm{T}}
\newcommand{\covarJL}{\Sigma}
\newcommand{\one}{\pmb{1}}
\newcommand{\ExpGOF}{\texttt{LocalExpGOF}}
\newcommand{\ExpIND}{\texttt{LocalExpIND}}
\newcommand{\LapGOF}{\texttt{LocalLapGOF}}
\newcommand{\GaussGOF}{\texttt{LocalNoiseGOF}}
\newcommand{\GaussIND}{\texttt{LocalNoiseIND}}
\newcommand{\chisqExp}[1]{\stat_{\mathrm{Exp}}^{(n)}\left(#1\right)} 
\newcommand{\bchisqExp}[1]{\check{\pmb{\stat}}_{\mathrm{Exp}}^{(n)}\left(#1\right)} 
\newcommand{\BitGOF}{\texttt{LocalBitFlipGOF}}
\newcommand{\BitIND}{\texttt{LocalBitFlipIND}}
\newcommand{\Mbit}{\cM_{\text{bit}}}
\newcommand{\chisqBF}[1]{\stat_{\text{BitFlip}}^{(n)}\left(#1\right)} 
\newcommand{\bchisqBF}[1]{\tilde{\pmb{\stat}}_{\text{BitFlip}}^{(n)}\left(#1\right)} 
\newcommand{\alg}{\cM}
\renewcommand{\tilde}{\widetilde}
\renewcommand{\hat}{\widehat}
\newcommand\defn{\mathrm{defn}}
\newcommand{\Lap}{\mathrm{Lap}}
\DeclareMathOperator*{\myargmin}{\arg\!\min}
\newcommand{\1}{\mathbbm{1}}
\newcommand{\var}[1]{\mathbb{V}\left[#1\right]}
\newcommand{\ex}[1]{\mathbb{E}\left[#1\right]}
\DeclareMathOperator*{\Expectation}{\mathbb{E}}
\newcommand{\Ex}[2]{\Expectation_{#1}\left[#2\right]}
\newcommand{\prob}[1]{\mathrm{Pr}\left[#1\right]}
	\title{Local Private Hypothesis Testing: Chi-Square Tests}
         \author{}
	\author[1]{Marco Gaboardi}
	\author[2]{Ryan Rogers}
	\affil[1]{University at Buffalo, The State University of New York}
	\affil[2]{University of Pennsylvania}
\begin{document}

	\maketitle

\begin{abstract}
The local model for differential privacy is emerging as the reference
model for practical applications of collecting  and sharing sensitive information while satisfying
strong privacy guarantees.  In the local model, there is no \emph{trusted}
 entity which is allowed to have each individual's raw data as is
 assumed in the traditional \emph{curator} model for differential
 privacy. Individuals' data are usually perturbed before sharing them. 
We explore the design of private hypothesis tests in the local model,
where each data entry is perturbed to ensure the privacy of each
participant.  Specifically, we analyze locally private chi-square
tests for goodness of fit and independence testing, which have been
studied in the traditional, curator model for differential privacy.
 \end{abstract}
 
 \section{Introduction}
Hypothesis testing is a widely applied statistical tool used to test
whether given models should be rejected, or not, based on sampled data from a population.  Hypothesis testing was initially developed for scientific and survey data, but today it is also an essential tool to test models over collections of social network, mobile, and crowdsourced data~\citep{asa14,HGH08,Steele20160690}.
Collected data samples may contain highly sensitive information about
the subjects, and the privacy of individuals can be compromised when
the results of a data analysis are released. 
 In work from \cite{Homer08}, it was shown that a subject in a dataset
 can be identified as being in the case or control group based on the
 aggregate statistics of a genetic-wide association study
 (GWAS). Privacy-risks may bring data contributors to opt out, which
 reduces the confidence in the data study.
A way to address this concern is by developing new techniques to
support privacy-preserving data analysis. Among the different
approaches, differential privacy~\citep{DMNS06} has emerged as a viable
solution: it provides strong privacy guarantees and it allows to
release accurate statistics. A standard way to achieve differential
privacy is by injecting some statistical noise in the computation of
the data analysis. When the noise is carefully chosen, it helps
to protect the individual privacy without compromising the utility of
the data analysis.
Several recent works have studied differentially private hypothesis
tests that can be used in place of the standard,
non-private hypothesis tests~\citep{USF13,YFSU14,Shef15,KS16,WLK15, GLRV16,KR17,CaiDK17}. These tests work in the \emph{curator
model} of differential privacy. In this model, the data is
centrally stored and the curator carefully injects noise in the computation
of the data analysis in order to satisfy differential
privacy.

In this work we instead address the \emph{local model} of privacy,
formally introduced by \citet{KLNRS08}.  The first differentially private algorithm called \emph{randomized response} -- in fact it predates the definition of differential privacy by more than 40 years -- guarantees differential privacy in the local model \citep{Warner65}.  In this model, there is no \emph{trusted} centralized entity which is
responsible for the noise injection. Instead, each individual adds
enough noise to guarantee differential
privacy for their own data, which provides a stronger privacy guarantee when compared to traditional differential privacy. The data analysis is then run over the collection of the
individually sanitized data.  
The local model of differential privacy is a convenient model for
several applications: for example it is used to 
collect statistics about the activity of the Google Chrome Web
browser users~\citep{EPK14}, and to collect statistics about the typing
patterns of Apple's iPhone users~\citep{appleDP2016}.  
Despite these applications, the local model has received far less
attention than the more standard centralized curator model. This
is in part due to the more firm requirements imposed by this
model, which make the design of effective data analysis harder. 

Our main contribution is in designing chi-square hypothesis tests for
the local model of differential privacy. 
Similar to
previous works we focus on goodness of fit and independence hypothesis
tests.  Most of the differentially private chi-square hypothesis tests
proposed so far are based on mechanisms that add noise in
some form to
the aggregate data, e.g. the cells of the contingency tables, or the
resulting chi-square statistics value. These approaches cannot be used
in the local model, since noise needs to be added at the individual's
data level. We then consider instead general privatizing
techniques in the local model, and we study how to build new hypothesis tests with them. 
Each test we present is characterized by a specific local model mechanism.
The main technical challenge for designing each test is to create statistics, which incorporate the local model mechanisms, that converge as we collect more data to a chi-square distribution, as in the classical chi-square tests.  We then use these statistics to 
find the critical value to correctly bound the Type I error.

We present three different goodness of fit tests:
$\GaussGOF$ presents a statistic that guarantees the convergence to a
chi-square distribution under the null hypothesis so that we can use the correct critical values when local (concentrated) differential privacy is guaranteed by adding Laplace or Gaussian noise to the individual
data;
$\ExpGOF$ also provides a statistic that converges to a chi-square under the null hypothesis when a private value for each individual is selected by using the exponential mechanism \citep{MT07};
finally, $\BitGOF$ introduces a statistic that converges to a chi-square distribution when the data is privatized using a bit flipping algorithm \citep{BS15}, which provide better accuracy for higher dimensional data.  Further, we develop corresponding independence tests: $\GaussIND$, $\ExpIND$, and $\BitIND$.  
For all these tests we study their asymptotic behavior.  A desiderata
for private hypothesis tests is to have a guaranteed upper bound on the
probability of a false discovery (or Type I error) -- rejecting a null
hypothesis or model when the data was actually generated from it -- and
 to minimize the probability of a Type II error, which is
failing to reject the null hypothesis when the model is indeed false. This
latter criteria corresponds to the \emph{power} of the statistical test. We then
present experimental results showing the power of the
different tests which demonstrates that no single local differentially private algorithm is best across all data dimensions and privacy parameter regimes.
\section{Related Works}
There have been several works in developing private hypothesis test for categorical data, but all look at the traditional model of (concentrated) differential privacy instead of the local model, which we consider here. 
Several works have explored private statistical inference for GWAS data, \citep{USF13,YFSU14,JS13}.  Following these works, there has also been general work in private chi-square hypothesis tests, where the main tests are for goodness of fit and independence testing, although some do extend to more general tests \citep{WLK15,GLRV16,KR17,CaiDK17,KFS17}.  
Among these, the works most related to ours are the ones by \citep{GLRV16,KR17}. We will compare our work with these in Section~\ref{sec:hyp_test} after introducing them.
There has also been work in private hypothesis testing for ordinary least squares regression \citep{Shef15}.

There are other works that have studied statistical inference and estimators in the local model of differential privacy.  
\citet{DJW13NIPS, DJW13FOCS} focus on controlling disclosure risk in statistical estimation and inference by ensuring the analysis satisfies local differential privacy.  They provide minimax convergence rates to show the tight tradeoffs between privacy and statistical efficiency, i.e. the number of samples required to give quality estimators.  
In their work, they show that a generalized version of randomized response gives optimal sample complexity for estimating the multinomial probability vector. We use this idea as the basis for our hypothesis test $\BitGOF$.
\citet{KOV14} also considers hypothesis testing in the local model, although they measure utility in terms of $f$-divergences and do not give a decision rule, i.e. when to reject a given null hypothesis.  We provide statistics whose distributions asymptotically follow a chi-square distribution, which allows for approximating statistical $p$-values that can be used in a decision rule.  We consider their \emph{extremal} mechanisms and empirically confirm their result that for small privacy regimes (small $\epsilon$) one mechanism has higher \emph{utility} than other mechanisms and for large privacy regimes (large $\epsilon$) a different mechanism outperforms the others.  However, we measure utility in terms of the power of a locally private hypothesis test subject to a given Type I error bound.  Other notable works in the local privacy model include \citet{PG16, KBR16, YB17}

 \section{Preliminaries}
We consider datasets $\bbx = (x_1, \cdots, x_n) \in \cX^n$ in some data universe $\cX$, typically $\cX = \{0,1\}^d$ where $d$ is the dimensionality.  We first present the standard definition of differential privacy, as well as its variant \emph{concentrated differential privacy}.  We say that two datasets $\bbx, \bbx' \in \cX^n$ are \emph{neighboring} if they differ in at most one element, i.e. $\exists i \in [n]$ such that $x_i \neq x_i'$ and $\forall j \neq i$, $x_j = x_j'$.

\begin{definition}[\cite{DMNS06,DKMMN06}]
An algorithm $\cM: \cX^n \to \cY$ is $(\epsilon,\delta)$-differentially private (DP) if for all neighboring datasets $\bbx, \bbx' \in \cX^n$ and for all outcomes $S \subseteq \cY$, we have
$
\prob{\cM(\bbx) \in S} \leq e^\epsilon\prob{\cM(\bbx') \in S} + \delta.
$
\end{definition}
We then state the definition of zero-mean concentrated differential privacy.
\begin{definition}[\cite{BS16}]
An algorithm $\cM: \cX^n \to \cY$ is $\rho$-zero-mean concentrated differentially private (zCDP) if for all neighboring datasets $\bbx, \bbx' \in \cX^n$, we have the following bound for all $t>0$ where the expectation is over outcomes $y \sim \cM(\bbx)$,
$$
\Ex{}{\exp\left(t \left( \ln\left( \frac{\prob{\cM(\bbx) = y}}{\prob{\cM(\bbx') = y} }\right) - \rho\right)\right)} \leq e^{t^2\rho}.
$$
\end{definition}

Note that in both of these privacy definitions, it is assumed that all the data is stored in a central location and the algorithm $\cM$ can access all the data.  Most of the work in differential privacy has been in this \emph{trusted curator model}.  
One of the main reasons for this is that we can achieve much greater accuracy in our differentially private statistics when used in the curator setting.  However, in many cases, having a trusted curator is too strong of an assumption. 
 We then define \emph{local} differential privacy, formalized by \cite{KLNRS08} and \cite{DR14}, which does not require the subjects to release their raw data, rather each data entry is perturbed to prevent the true entry from being stored.  Thus, local differential privacy ensures a very strong privacy guarantee.

\begin{definition}[LR Oracle]
Given a dataset $\bbx$, a \emph{local randomizer oracle} $LR_{\bbx}(\cdot, \cdot)$ takes as input an index $i \in[n]$ and an $\epsilon$-DP algorithm $R$, and outputs $y \in \cY$ chosen according to the distribution of $R(x_i)$, i.e. $LR_{\bbx}(i,R) = R(x_i)$.  
\end{definition}

\begin{definition}[\citet{KLNRS08}]
An algorithm $\alg: \cX^n \to \cY$ is $(\epsilon,\delta)$-\emph{local differentially private} (LDP) if it accesses the input database $\bbx$ via the LR oracle $LR_{\bbx}$ with the following restriction: if $LR(i,R_j)$ for $j \in [k]$ are $\alg$'s invocations of $LR_{\bbx}$ on index $i$, then each $R_j$ for $j \in [k]$ is  $(\epsilon_j,\delta_j)$- DP and $\sum_{j=1}^k \epsilon_j \leq \epsilon$, $\sum_{j=1}^k \delta_j \leq \delta$.
\end{definition}
An easy consequence of this definition is that an algorithm which is $(\epsilon,\delta)$-LDP is also $(\epsilon,\delta)$-DP.  Note that these definitions can be extended to include $\rho$-local zCDP (LzCDP) where each local randomizer is $\rho_j$-zCDP and $\sum_{j = 1}^k \rho_j \leq \rho$.  We point out the following connection between $LzCDP$ and $LDP$, which follows directly from results in \cite{BS16}
\begin{lemma}
If $\cM: \cX^n \to \cY$ is $(\epsilon,0)$-LDP then it is also $\epsilon^2/2$-LzCDP.  If $\cM$ is $\rho$-LzCDP, then it is also $\left(\left(\rho + \sqrt{2\rho\ln(2/\delta)} \right),\delta\right)$-LDP for any $\delta >0$.  
\end{lemma}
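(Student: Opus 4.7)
Both directions reduce to applying a corresponding conversion result from \cite{BS16} to the local randomizers that $\alg$ invokes on a fixed index, together with the sum conditions built into the definitions of LDP and LzCDP. Throughout, fix an arbitrary index $i \in [n]$ and let $R_1, \ldots, R_k$ denote the local randomizers that $\alg$ invokes on $x_i$.

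For the forward direction, suppose each $R_j$ is $(\epsilon_j, 0)$-DP with $\sum_j \epsilon_j \le \epsilon$. By the pure-DP to zCDP conversion of \cite{BS16}, each $R_j$ is also $\epsilon_j^2/2$-zCDP. Since all $\epsilon_j$ are nonnegative, $\sum_j \epsilon_j^2 \le \left(\sum_j \epsilon_j\right)^2 \le \epsilon^2$, so the per-index zCDP parameters sum to at most $\epsilon^2/2$, yielding $\epsilon^2/2$-LzCDP.

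For the reverse direction, suppose each $R_j$ is $\rho_j$-zCDP with $\sum_j \rho_j \le \rho$. The key step is to first compose within the index before converting: by the (adaptive) composition theorem for zCDP from \cite{BS16}, the full tuple $(R_1(x_i), \ldots, R_k(x_i))$ can be realized as the output of a single composed local randomizer $R^\star$ that is $\rho$-zCDP and touches only $x_i$. Applying the zCDP to $(\epsilon, \delta)$-DP conversion of \cite{BS16} once at the end gives that $R^\star$ is $\left(\rho + \sqrt{2\rho \ln(2/\delta)}, \delta\right)$-DP, and viewing $\alg$ as accessing index $i$ through this single composed randomizer satisfies the LDP sum conditions with the claimed parameters.

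The main subtlety is in the reverse direction: if one instead converted each $R_j$ separately to approximate DP and then summed, the resulting $\sum_j \sqrt{2\rho_j \ln(2/\delta_j)}$ term would not in general be bounded by $\sqrt{2\rho \ln(2/\delta)}$ under any natural split of the $\delta$ budget. Consolidating the per-index randomizers into a single $\rho$-zCDP mechanism before switching parametrizations is what makes the budget accounting tight and is the only non-routine step in the argument.
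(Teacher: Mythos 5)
Your proof is correct, and it follows exactly the route the paper intends: the paper gives no explicit proof, stating only that the lemma ``follows directly from results in \cite{BS16},'' and your argument is the natural instantiation of that claim via the per-randomizer pure-DP-to-zCDP conversion (with $\sum_j \epsilon_j^2 \le (\sum_j \epsilon_j)^2$) in one direction and zCDP composition followed by a single conversion to approximate DP in the other. Your observation that the reverse direction requires consolidating the per-index randomizers before converting---rather than converting each $R_j$ separately and summing the $\sqrt{2\rho_j\ln(2/\delta_j)}$ terms---is a genuine and worthwhile point that the paper leaves implicit.
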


Thus, in the local setting, \emph(pure) LDP (where $\delta = 0$) provides the strongest level of privacy, followed by LzCDP and then \emph{approximate}-LDP (where $\delta >0)$.  
\section{Chi-Square Hypothesis Tests}
\label{sec:hyp_test}
As was studied in \cite{GLRV16}, \cite{WLK15}, and \cite{KR17}, we will study hypothesis tests with categorical data.  A null hypothesis, or model $\nullhyp$ is how we might expect the data to be generated.  The goal for hypothesis testing is to reject the null hypothesis if the data is not likely to have been generated from the given model.  As is common in statistical inference, we want to design hypothesis tests to bound the probability of a false discovery (or Type I error), i.e. rejecting a null hypothesis when the data was actually generated from it, by at most some amount $\alpha$, such as $5\%$.  However, designing tests that achieve this is easy, because we can just ignore the data and always \emph{fail to reject} the null hypothesis, i.e. have an inconclusive test.  Thus, we would additionally like to design our tests so that they can reject $\nullhyp$ if the data was not actually generated from the given model.  We then want to minimize the probability of a Type II error, which is failing to reject $\nullhyp$ when the model is false, subject to a given Type I error.

For goodness of fit testing, we assume that each individual's data $\bX_i$ for $i \in [n]$ is sampled i.i.d. from $\mult(1,\bp)$ where $\bp\in \R^d_{> 0}$ and $\bp^\transpose \cdot \one =1$.   The classical chi-square hypothesis test (without privacy) forms the histogram $\bH = (H_1, \cdots, H_d) = \sum_{i=1}^n \bX_i$ and computes the chi-square statistic
$
\stat = \sum_{j =1}^d \frac{\left(H_j - n \nullp_j\right)^2}{n \nullp_j}.
$
 The reason for using this statistic is that it converges in distribution to $\chi_{d-1}^2$ as more data is collected, i.e. $n \to \infty$, when $\nullhyp: \bp =   \nullbp$ holds.  Hence, we can ensure the probability of false discovery to be close to $\alpha$ as long as we only reject $\nullhyp$ when $\stat > \chi^2_{d-1,1-\alpha}$ where the \emph{critical value} $\chi^2_{d-1,1-\alpha}$ is defined as the following quantity $\prob{\chi^2_{d-1} > \chi^2_{d-1,1-\alpha}} = \alpha$.  

\subsection{Prior Private Chi-square Tests in the Curator Model}

One approach for chi-square private hypothesis tests that was explored by \cite{GLRV16} and \cite{WLK15} is to add noise (Gaussian or Laplace) directly to the histogram to ensure privacy and then use the classical test statistic.  Note that the resulting asymptotic distribution needs to be modified for such changes to the statistic -- it is no longer a chi-square random variable.  To introduce the different statistics, we will consider goodness of fit testing after adding noise $\bZ$ from distribution $\cD^n$ to the histogram of counts $\tilde{\bH} = \bH + \bZ$, which ensures $\rho$-zCDP when $\cD = \Normal{0}{1/\rho}$ and $\epsilon$-DP when $\cD = \Lap(2/\epsilon)$.  The chi-square statistic then becomes
\begin{equation}
\chisqGLRV{\cD} = \sum_{i=1}^d \frac{\left(H_i + Z_i  - n \nullp_i\right)^2}{n\nullp_i} \quad \text{ where } \bZ \sim \cD^n.
\label{eq:GLRV}
\end{equation}
They then show that this statistic converges in distribution to a linear combination of chi-squared random variables, when $\cD \sim \Normal{0}{1/\rho}$ and $\rho$ is also decreasing with $n$.

In followup work from \cite{KR17}, the authors showed that modifying the chi-square statistic to account for the additional noise leads to tests with better empirical power.  The \emph{projected} statistic from \cite{KR17} is the following where we use projection matrix $\Pi \stackrel{\defn}{=} \left(I_d - \frac{1}{d} \one\one^\transpose \right)$, middle matrix $M_\sigma =  \Pi \left( \diag\left(\nullbp+\sigma \right) - \nullbp\left(\nullbp\right)^\transpose \right)^{-1} \Pi$, and sample noise $\bZ \sim \cD^n$,
\begin{equation}
 \chisqKR{\sigma;\cD} = n \left( \frac{\bH+ \bZ}{n}  -  \nullbp\right)^\transpose M_\sigma \left( \frac{\bH +\bZ}{n}  -  \nullbp\right).
\label{eq:KR}
\end{equation}

We use $\cD = \Lap(2/\epsilon)$ with $\sigma = \frac{8}{n \epsilon^2}$ for an $\epsilon$-DP claim or $\cD = \Normal{0}{1/\rho}$ with $\sigma = \frac{1}{n \rho}$ for a $\rho$-zCDP claim. When comparing the power of all our tests, we will be considering the alternate $\althyp: \bp = \bp^1_n$ where 
\begin{equation}
\bp^1_n = \nullbp + \frac{\bDelta}{\sqrt{n}} \qquad \text{ where } \one^\transpose \bDelta = 0.
\label{eq:althyp}
\end{equation}

\begin{theorem}[\cite{KR17}]
Under the null hypothesis $\nullhyp: \bp = \nullbp$, the statistic $\chisqKR{\frac{1}{n\rho}; \Normal{0}{1/\rho}}$ given in \eqref{eq:KR} for $\rho>0$ converges in distribution to $\chi^2_{d-1}$.  Further, under the alternate hypothesis $\althyp:\bp = \bp^{1}_n$, 
the resulting asymptotic distribution is a noncentral chi-square random variable,
\begin{align*}
& \chisqKR{\frac{1}{n\rho} ; \Normal{0}{1/\rho}} \stackrel{D}{\to}\\
& \qquad  \chi_{d-1}^2\left( \bDelta^\transpose \left(\diag(\nullbp) - \nullbp \left(\nullbp \right)^\transpose+ 1/\rho I_d \right)^{-1} \bDelta\right).
\end{align*}
\label{thm:KRtest}
\end{theorem}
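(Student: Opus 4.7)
My plan is to rewrite the statistic as a quadratic form $W_n^\transpose M_{\sigma_n} W_n$ in the vector $W_n := \sqrt{n}((\bH + \bZ)/n - \nullbp)$ with $\sigma_n = 1/(n\rho)$, pass to a joint asymptotic limit as $n \to \infty$, and apply the classical criterion that identifies which quadratic forms in a Gaussian vector are (noncentral) chi-square distributed.

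First I would establish joint convergence of $(W_n, M_{\sigma_n})$. Under the local alternative $\bp = \bp_n^1 = \nullbp + \bDelta/\sqrt{n}$ (which collapses to $\nullhyp$ when $\bDelta = 0$), the multivariate CLT applied to $\bH \sim \mult(n, \bp_n^1)$, together with convergence of its normalised covariance to $\Sigma_0 := \diag(\nullbp) - \nullbp(\nullbp)^\transpose$, gives $\sqrt{n}(\bH/n - \nullbp) \Rightarrow \Normal{\bDelta}{\Sigma_0}$. The independent Gaussian noise contributes $\bZ/\sqrt{n} \Rightarrow \Normal{0}{I_d/\rho}$, so $W_n \Rightarrow W \sim \Normal{\bDelta}{C}$ with $C := \Sigma_0 + I_d/\rho$. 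Continuity of matrix inversion then yields $M_{\sigma_n} \to M$, and using that $\Sigma_0 \one = 0$ makes $\one$ an eigenvector of $C$ with eigenvalue $1/\rho$, so that $\Pi$ commutes with $C$, the limit can be written $M = \Pi C^{-1}\Pi = C^{-1}\Pi$. By the continuous mapping theorem, $\chisqKR{\sigma_n;\Normal{0}{1/\rho}} \Rightarrow W^\transpose M W$.

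Next I would identify the limit distribution using the classical criterion: if $W \sim \Normal{\mu}{C}$, the form $W^\transpose A W$ is noncentral $\chi^2_k$ with noncentrality $\mu^\transpose A \mu$ whenever $AC$ is idempotent of rank $k$. Direct algebra yields $MC = \Pi C^{-1}\Pi C = \Pi C^{-1} C \Pi = \Pi$, which is a projection with $\mathrm{rank}(\Pi) = d-1$; moreover $MCM = \Pi M = M$. Since $\one^\transpose \bDelta = 0$ implies $\Pi \bDelta = \bDelta$, the noncentrality evaluates to $\bDelta^\transpose M \bDelta = \bDelta^\transpose C^{-1}\bDelta = \bDelta^\transpose(\Sigma_0 + I_d/\rho)^{-1}\bDelta$, matching the stated limit. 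Under $\nullhyp$ the mean vanishes and the limit collapses to the central $\chi^2_{d-1}$.

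The hard part is coordinating the two limits $W_n \Rightarrow W$ and $M_{\sigma_n} \to M$ so that the singular $\one$-direction of $\Sigma_0$ is cancelled exactly by the $\Pi$ factors in $M_{\sigma_n}$; without this cancellation one would only get a weighted sum of chi-squares rather than a clean $\chi^2_{d-1}$ limit. Everything else is routine application of the multinomial CLT, continuity of matrix inversion, and the Gaussian quadratic form identity.
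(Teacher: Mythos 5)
The paper itself does not prove this statement---it is imported verbatim from \cite{KR17}---so there is no in-paper proof to compare against. Your argument is correct and is essentially the standard one; it also mirrors the route the paper takes for its own analogous results (e.g.\ the theorem for $\chisqBF{\epsilon}$): show $W_n \Rightarrow \Normal{\bDelta}{C}$ with $C = \diag(\nullbp)-\nullbp(\nullbp)^\transpose + \tfrac{1}{\rho}I_d$, use that $\one$ is an eigenvector of $C$ so that $\Pi$ commutes with $C^{-1}$ and $MC=\Pi$, and then invoke the Gaussian quadratic-form criterion, which is \Cref{thm:classical} after whitening. The one step worth flagging is the noise scaling: your claim $\bZ/\sqrt{n}\Rightarrow \Normal{\pmb{0}}{\tfrac{1}{\rho}I_d}$ presupposes that the total per-cell noise variance grows like $n/\rho$ (the regime in which the paper actually deploys this statistic, $\bZ=\sum_i \bZ_i$ with each $\bZ_i\sim\Normal{\pmb{0}}{\tfrac{1}{\rho}I_d}$, equivalently $\rho$ scaling like $1/n$ in the curator model). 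Under a literal reading in which a single $\Normal{0}{1/\rho}$ is added to each cell, the noise term is $O_P(1/\sqrt{n})$ after normalization, the $\tfrac{1}{\rho}I_d$ term disappears from both $C$ and the noncentrality, and one recovers only the nonprivate limit; so your interpretation is the one under which the stated conclusion is true, and it is the operationally relevant one, but it deserves an explicit sentence rather than being folded silently into the CLT step.
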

When $\cD = \Lap(2/\epsilon)$, we can still obtain the null hypothesis distribution using Monte Carlo simulations to estimate the critical value, since the asymptotic distribution will no longer be chi-square.  That is, we can obtain $m$ samples from the statistic under the null hypothesis with Laplace noise added to the histogram of counts.  We can then guarantee that the probability of a false discovery is at most $\alpha$ as long as $m > \lceil 1/\alpha \rceil$ (see \cite{GLRV16} for more details). 
 \section{Local Private Chi-Square Goodness of Fit Tests}
We now turn to designing local private goodness of fit tests.  We begin by showing how the existing statistics from the previous section can be used in the local setting and then develop new tests based on the \emph{exponential mechanism} \citep{MT07} and bit flipping \citep{BS15}.  Each test is locally private because it perturbs each individual's data through a local randomizer.  
We finish the section by empirically checking the power of each test to see which tests outperform others in different parameter regimes.  We empirically show that the power of a test is directly related to the size of the noncentral parameter of the chi-square statistic under the alternate distribution.

\subsection{Goodness of Fit Test with Noise Addition}
In the local model we can add $\bZ_{i} \sim \Normal{\pmb{0}}{\frac{1}{\rho}\ I_d}$ independent noise to each individual's data $\bX_i$ to ensure $\rho$-LzCDP or $\bZ_{i} \stackrel{i.i.d.}{\sim} \Lap\left(\frac{2}{\epsilon}\right)$ independent noise to $\bX_i$ to ensure $\epsilon$-LDP.    In either case, the resulting noisy histogram $\hat{\bH} = \bH + \bZ$ where $\bZ = \sum_i \bZ_i$ will have variance that scales with $n$ for fixed privacy parameters $\epsilon,\rho>0$.   Consider the case where we add Gaussian noise, which results in the following histogram, $\hat{\bH} = \bH + \bZ$ where $\bZ \sim \Normal{\pmb{0}}{\frac{n}{\rho} \ I_d}$.  Thus, we can use either statistic $\chisqGLRV{\rho/n}$ or $\chisqKR{\rho/n}$, with the latter statistic typically having better empirical power \citep{KR17}.  We then give our first local private hypothesis test in \Cref{alg:LocalGaussGOF}.
\begin{algorithm}
\caption{Locally Private GOF Test:$\GaussGOF$}
\label{alg:LocalGaussGOF}
\begin{algorithmic}
\REQUIRE $\bbx = (\bbx_1,\cdots, \bbx_n)$, $\rho$, $\alpha$, $\nullhyp: \bp = \nullbp$.
\STATE Let $\bH = \sum_{\ell = 1}^n \bbx_\ell$ 
\IF{$\cD = \Normal{0}{n/\rho}$}
	\STATE Set $ q = \chisqKR{1/\rho;\cD }$ given in \eqref{eq:KR}.
	\STATE {\bf if }$q > \chi_{d-1,1-\alpha}^2$
	 Decision $\gets $ Reject.
	\STATE {\bf else }
	 Decision $\gets$ Fail to Reject.
\ENDIF
\IF{$\cD =\sum_{i=1}^n \Lap(2/\epsilon)$}
	\STATE Set $ q = \chisqKR{8/\epsilon^2;\cD }$ given in \eqref{eq:KR}.
	\STATE Sample $m > \lceil 1/\alpha \rceil$ from the distribution of  $\chisqKR{8/\epsilon^2;\cD }$ assuming $H_0$
	\STATE Set $\tau$ to be the $\lceil (m+1)(1- \alpha)\rceil$th largest sample.
	\STATE {\bf if }$q > \tau$
	 Decision $\gets $ Reject.
	\STATE {\bf else }
	 Decision $\gets$ Fail to Reject.
\ENDIF
\ENSURE Decision
\end{algorithmic}
\end{algorithm}

\begin{theorem}
$\GaussGOF$ is $\rho$-LzCDP when $\cD = \Normal{0}{1/\rho}$ and $\epsilon$-LDP when $\cD = \Lap(2/\epsilon)$.  
\end{theorem}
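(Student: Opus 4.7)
The plan is to verify that each individual's contribution to $\GaussGOF$ is handled by a local randomizer satisfying the claimed per-user DP/zCDP guarantee, and then observe that everything after that is post-processing. Although the pseudocode writes $\bH = \sum_\ell \bbx_\ell$ and then samples $\bZ \sim \cD$ as a single draw, this is equivalent (in distribution) to letting each individual $i$ release $\bby_i = \bbx_i + \bZ_i$ with $\bZ_i \sim \Normal{\pmb{0}}{(1/\rho) I_d}$ (resp.\ i.i.d.\ $\Lap(2/\epsilon)$ per coordinate) and forming $\hat{\bH} = \sum_i \bby_i$. All subsequent operations (evaluating $\chisqKR{\cdot}{;\cD}$, Monte Carlo sampling under $\nullhyp$, picking the threshold $\tau$, outputting the decision) read only $\hat{\bH}$ together with public quantities $\nullbp,\alpha,\rho$ (or $\epsilon$), so by the post-processing property of differential privacy they cannot degrade the guarantee.

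The first step is the Gaussian/zCDP case. For any two neighboring inputs to individual $i$'s randomizer, i.e.\ any two one-hot vectors $\bbx_i, \bbx_i' \in \{0,1\}^d$, we have $\|\bbx_i - \bbx_i'\|_2 \leq \sqrt{2}$. The Gaussian mechanism of \cite{BS16} releasing $\bbx_i + \bZ_i$ with $\bZ_i \sim \Normal{\pmb{0}}{\sigma^2 I_d}$ satisfies $\left(\Delta_2^2/(2\sigma^2)\right)$-zCDP, and here $\Delta_2^2 = 2$ and $\sigma^2 = 1/\rho$, so this yields exactly $\rho$-zCDP per user. Since the algorithm invokes the LR oracle at each index $i$ only once with this single randomizer, the LzCDP budget sums trivially to $\rho$, establishing $\rho$-LzCDP.

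The second step is the Laplace/pure-DP case. Here the local randomizer adds independent $\Lap(2/\epsilon)$ noise to each coordinate of $\bbx_i$. The $\ell_1$-sensitivity of the identity map on the domain of one-hot vectors is $\|\bbx_i - \bbx_i'\|_1 \leq 2$, so the standard Laplace-mechanism calculation gives $\epsilon$-DP for this per-user release. Again, because each individual is queried only once, the LDP composition constraint in the definition is satisfied with equality, giving $\epsilon$-LDP.

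There is no real obstacle: the proof is a straightforward invocation of the standard Gaussian and Laplace mechanism guarantees, followed by the observation that only one local randomizer is applied per user, together with the post-processing property applied to the remaining deterministic/public-randomness operations. The only minor care needed is to articulate that generating the $m$ Monte Carlo samples in the Laplace branch uses only the public null $\nullbp$ and fresh independent noise, so it is part of post-processing and does not access $\bbx$ again.
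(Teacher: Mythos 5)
Your proof is correct and follows essentially the same route as the paper's: per-user noise addition is a local randomizer (Gaussian/Laplace mechanism), and the aggregation and thresholding are post-processing. You additionally spell out the sensitivity calculations ($\|\bbx_i-\bbx_i'\|_2 \le \sqrt{2}$ and $\|\bbx_i-\bbx_i'\|_1 \le 2$ for one-hot vectors) that the paper leaves implicit under the phrase ``appropriately scaled noise,'' which is a welcome bit of extra rigor but not a different argument.
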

 \begin{proof}
The proof follows from the fact that we are adding appropriately scaled noise to each individual's data via the Gaussian mechanism and then $\GaussGOF$ aggregates the privatized data, which is just a post-processing function on the privatized data.  
\end{proof}

Although we cannot guarantee the probability of a Type I error at most $\alpha$ due to the fact that we use the asymptotic distribution (as in the tests from prior work and the classical chi-square tests without privacy), we expect the Type I errors to be similar to those from the nonprivate test.
Note that the test can be modified to accommodate arbitrary noise distributions, e.g. Laplace to ensure differential privacy as was done in \cite{KR17}.  In this case, we can use a Monte Carlo (MC) approach to estimate the critical value $\tau$ that ensures the probability of a Type I error is at most $\alpha$ if we reject $\nullhyp$ when the statistic is larger than $\tau$.  For the local setting, if each individual perturbs each coordinate by adding $\Lap\left(2/\epsilon\right)$ then this will ensure our test is $\epsilon$-LDP.  However, the sum of independent Laplace random variables is not Laplace, so we will need to estimate a sum of $n$ independent Laplace random variables using MC.  In the experiments section we will compare the other local private tests with the version of $\GaussGOF$ which uses Laplace noise and samples $m$ entries from the exact distribution under $\nullhyp$ to find the critical value.

Rather than having to add noise to each component of the original data histogram, we consider applying randomized response to obtain a LDP hypothesis test.  We will use a form of the \emph{exponential mechanism} \citep{MT07} given in \Cref{alg:multRR} which takes a single data entry from the set $\{\bbe_1,\cdots, \bbe_d\}$, where $\bbe_j \in \R^d$ is the standard basis element with a 1 in the $j$th coordinate and is zero elsewhere, and reports the original entry with probability slightly more than uniform and otherwise reports a different element.  Note that $\multRR$ takes a single data entry and is $\epsilon$-differentially private.

\begin{algorithm}
\caption{Exponential Mechanism: $\multRR$}
\label{alg:multRR}
\begin{algorithmic}
\REQUIRE $\bbx \in \{\bbe_1,\cdots, \bbe_d\}$, $\epsilon$.
\STATE Let $q(\bbx,\bbz) = \1\{\bbx =\bbz \}$
\STATE Select $\check {\bbx}$ with probability $\frac{\exp\left[\epsilon \ q(\bbx,\check{\bbx}) \right]}{e^\epsilon-1+d}$
\ENSURE $\check {\bbx}$
\end{algorithmic}
\end{algorithm}

We have the following result when we use $\multRR$ on each data entry to obtain a private histogram.
\begin{lemma}
If we have histogram $\bH = \sum_{i=1}^n\bX_i$, where $\{\bX_i\}
\stackrel{i.i.d.}{\sim} \mult(1,\bp)$ and we write $\check{\bH} =
\sum_{i=1}^n  \multRR(\bX_i,\epsilon)$ for each $i \in [n]$, then
$\check{\bH} \sim \mult(n,\check{\bp})$ where
\begin{equation}
\check{\bp} = \bp\left(\frac{e^\epsilon}{e^\epsilon + d-1}\right)  + (1-\bp)\left(\frac{1}{e^\epsilon+d-1}\right).
\label{eq:exphist}
\end{equation}
\label{lem:RRhist}
\end{lemma}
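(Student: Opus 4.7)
The plan is to reduce the claim to two observations: first, that each single privatized entry $\multRR(\bX_i,\epsilon)$ is itself distributed as $\mult(1,\check{\bp})$, and second, that the sum of $n$ i.i.d.\ $\mult(1,\check{\bp})$ random vectors is $\mult(n,\check{\bp})$. The second point is a standard property of the multinomial distribution and follows immediately from the fact that the $\bX_i$'s are i.i.d.\ and $\multRR$ applies fresh independent randomness to each input. So the real content is the per-sample distribution calculation.

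For the per-sample calculation, I would condition on the value of $\bX_i$. From the definition of $\multRR$, if the true input is $\bbe_j$, then $q(\bbe_j,\bbe_k) = \1\{k=j\}$, so the normalizing denominator is $e^\epsilon + (d-1)$, giving
\begin{equation*}
\Prob{}{\multRR(\bbe_j,\epsilon) = \bbe_k} = \begin{cases} \dfrac{e^\epsilon}{e^\epsilon+d-1} & k=j,\\[4pt] \dfrac{1}{e^\epsilon+d-1} & k \neq j. \end{cases}
\end{equation*}
Then by the law of total probability and using $\Prob{}{\bX_i = \bbe_j} = p_j$, the $k$th coordinate of the output distribution is
\begin{equation*}
\Prob{}{\multRR(\bX_i,\epsilon) = \bbe_k} = p_k \cdot \frac{e^\epsilon}{e^\epsilon+d-1} + (1-p_k) \cdot \frac{1}{e^\epsilon+d-1},
\end{equation*}
which is exactly the $k$th coordinate of the vector $\check{\bp}$ given in \eqref{eq:exphist}. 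A quick sanity check that $\check{\bp}^\transpose \one = 1$ follows from $\bp^\transpose \one = 1$ and $((e^\epsilon) + (d-1))/(e^\epsilon + d - 1) = 1$, confirming this defines a valid distribution on $\{\bbe_1,\ldots,\bbe_d\}$.

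Having established that each $\multRR(\bX_i,\epsilon) \sim \mult(1,\check{\bp})$ independently, I would close the argument by noting that $\check{\bH}$ is the sum of $n$ such i.i.d.\ vectors and therefore $\check{\bH} \sim \mult(n,\check{\bp})$, as desired. There is no real obstacle here; the only place a careless reader could slip is in failing to separate the randomness in $\bX_i$ from the independent randomness used by $\multRR$, so I would be explicit that the probability above is taken jointly over both sources.
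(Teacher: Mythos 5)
Your proof is correct and complete: the per-sample computation of the output distribution of $\multRR$ (normalizer $e^\epsilon + d - 1$, probability $e^\epsilon/(e^\epsilon+d-1)$ of reporting truthfully, $1/(e^\epsilon+d-1)$ for each other outcome), the law of total probability over $\bX_i$, and the closing observation that a sum of i.i.d.\ $\mult(1,\check{\bp})$ vectors is $\mult(n,\check{\bp})$ are exactly the right steps. The paper itself states this lemma without proof (its appendix only proves the bit-flipping covariance lemma), so there is nothing to compare against; your argument is the standard one the authors evidently had in mind, and your explicit remark about separating the randomness of $\bX_i$ from the mechanism's internal randomness is the one point worth making carefully.
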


Once we have identified the distribution of $\check{\bH}$, we can create a chi-square statistic by subtracting $\check{\bH}$ by its expectation and dividing the difference by the expectation.  Hence testing $\nullhyp: \bp = \nullbp$ after the data has passed through the exponential mechanism, is equivalent to testing $\nullhyp: \bp = \check{\nullbp}$ with data $\check{\bH}$.  We will use the following classical result to prove our theorems.
\begin{theorem}[\cite{Ferg96}]
If $\bX \sim \Normal{\pmb{\mu}}{\Sigma}$ and $\Sigma$ is a projection matrix of rank $\nu$ and $\Sigma \pmb{\mu}  = \pmb{\mu}$ then $X^\intercal X \sim \chi^2_\nu ( \pmb{\mu}^\intercal \pmb{\mu})$.
\label{thm:classical}
\end{theorem}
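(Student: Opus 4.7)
The plan is to diagonalize $\Sigma$ and reduce $\bX^\intercal \bX$ to a sum of squared independent Gaussians, from which the noncentral chi-square distribution can be read off directly from its definition.

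Since $\Sigma$ is symmetric idempotent of rank $\nu$, spectrally decompose $\Sigma = U D U^\intercal$ with $U$ orthogonal and $D = \diag(I_\nu, \pmb{0})$, so exactly $\nu$ eigenvalues equal $1$ and the remaining $d-\nu$ are $0$. Set $\bY = U^\intercal \bX$, so $\bY \sim \Normal{U^\intercal \pmb{\mu}}{D}$ and, by orthogonality of $U$, $\bX^\intercal \bX = \bY^\intercal \bY$. Partition $\bY = (\bY_1,\bY_2)$ and $U^\intercal \pmb{\mu} = (\pmb{\mu}_1,\pmb{\mu}_2)$ along the block structure of $D$; then $\bY_1 \sim \Normal{\pmb{\mu}_1}{I_\nu}$ with independent coordinates, while $\bY_2$ has zero covariance and is almost surely equal to $\pmb{\mu}_2$.

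The next step invokes the hypothesis $\Sigma \pmb{\mu} = \pmb{\mu}$. In the $U$-basis this reads $D (\pmb{\mu}_1, \pmb{\mu}_2)^\intercal = (\pmb{\mu}_1, \pmb{\mu}_2)^\intercal$, which forces $\pmb{\mu}_2 = \pmb{0}$ and hence $\bY_2 \equiv \pmb{0}$ almost surely. Therefore $\bX^\intercal \bX = \bY_1^\intercal \bY_1$ is a sum of $\nu$ independent unit-variance Gaussians with mean vector $\pmb{\mu}_1$, which by definition is distributed as $\chi^2_\nu(\pmb{\mu}_1^\intercal \pmb{\mu}_1)$. The noncentrality parameter then simplifies via $\pmb{\mu}_1^\intercal \pmb{\mu}_1 = (U^\intercal \pmb{\mu})^\intercal D (U^\intercal \pmb{\mu}) = \pmb{\mu}^\intercal \Sigma \pmb{\mu} = \pmb{\mu}^\intercal \pmb{\mu}$, using $\Sigma \pmb{\mu} = \pmb{\mu}$ once more.

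The only subtle point is precisely the use of $\Sigma \pmb{\mu} = \pmb{\mu}$: without it, the deterministic block $\bY_2 = \pmb{\mu}_2$ would contribute a nonzero shift $\|\pmb{\mu}_2\|^2$ to $\bX^\intercal \bX$, producing a translated chi-square rather than a clean noncentral one. Everything else is routine linear algebra together with the textbook definition of a noncentral chi-square random variable.
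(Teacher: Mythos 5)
Your proof is correct. Note that the paper itself does not prove this statement: it is imported verbatim from Ferguson (1996) as a classical fact and used as a black box in the proofs of Theorems 5.5 and 5.9. Your argument --- spectrally decomposing the idempotent covariance as $\Sigma = U\,\mathrm{Diag}(I_\nu,\pmb{0})\,U^\intercal$, rotating to $\bY = U^\intercal\bX$, using $\Sigma\pmb{\mu}=\pmb{\mu}$ to kill the degenerate block, and reading off $\chi^2_\nu(\pmb{\mu}_1^\intercal\pmb{\mu}_1)$ with $\pmb{\mu}_1^\intercal\pmb{\mu}_1 = \pmb{\mu}^\intercal\Sigma\pmb{\mu} = \pmb{\mu}^\intercal\pmb{\mu}$ --- is the standard textbook derivation and correctly isolates the one place where the hypothesis $\Sigma\pmb{\mu}=\pmb{\mu}$ is indispensable.
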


We can then form a chi-square statistic using the private histogram $\check{\bH}$ which will have the correct asymptotic distribution.
\begin{theorem}
Let $\bH \sim \mult(n,\bp)$ and $\check{\bH}$ be given in \Cref{lem:RRhist} with privacy parameter $\epsilon>0$.  Under the null hypothesis $\nullhyp:\bp = \nullbp$, we have for $\check{\nullbp} = \frac{1}{e^\epsilon + d-1}\left( e^\epsilon \nullbp   + (1-\nullbp)\right)$, and
\begin{equation}
\chisqExp{\epsilon}  = \sum_{j=1}^d \frac{(\check{H}_j- n \check{p}_j^0 )^2}{n \check{p}_j^0} \stackrel{D}{\to} \chi_{d-1}^2.
\label{eq:localchisq}
\end{equation}
Further, with alternate $\althyp: \bp = \bp^1_n$, the resulting asymptotic distribution is the following,
$
\chisqExp{\epsilon}\stackrel{D}{\to} \chi_{d-1}^2\left( \left(\frac{e^\epsilon - 1}{e^\epsilon + d-1} \right)^2 \sum_{j=1}^d\frac{\Delta_j^2}{\check{p}_j^0} \right)
$
\label{thm:ExpChiTest}
\end{theorem}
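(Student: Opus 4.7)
The plan is to reduce the statement to the classical Pearson-chi-square argument applied to the \emph{privatized} histogram $\check{\bH}$. By \Cref{lem:RRhist}, $\check{\bH} \sim \mult(n,\check{\bp})$ with $\check{p}_j = \frac{(e^\epsilon-1)p_j+1}{e^\epsilon+d-1}$, so running through $\multRR$ turns a goodness-of-fit test for $\bp$ into an ordinary multinomial goodness-of-fit test for $\check{\bp}$, for which the target distribution under $\nullhyp$ is $\check{\nullbp}$. The statistic $\chisqExp{\epsilon}$ in \eqref{eq:localchisq} is exactly the Pearson statistic formed from $\check{\bH}$ and $\check{\nullbp}$, so the two assertions will follow once we invoke the multivariate CLT plus \Cref{thm:classical}.

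For the null claim, I would apply the CLT to the i.i.d.\ random vectors $\multRR(\bX_i,\epsilon) - \check{\nullbp}$ to obtain $\frac{1}{\sqrt{n}}(\check{\bH}-n\check{\nullbp}) \stackrel{D}{\to} \Normal{\pmb{0}}{\diag(\check{\nullbp}) - \check{\nullbp}(\check{\nullbp})^\transpose}$. Rescaling coordinate-wise to $Y_j = (\check{H}_j - n\check{p}_j^0)/\sqrt{n\check{p}_j^0}$ yields $\bY \stackrel{D}{\to} \Normal{\pmb{0}}{I_d - \sqrt{\check{\nullbp}}\sqrt{\check{\nullbp}}^\transpose}$, where the covariance is symmetric and idempotent (since $\sum_j \check{p}_j^0 = 1$) with rank $d-1$. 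Thus \Cref{thm:classical} applies with $\pmb{\mu}=\pmb{0}$ and gives $\chisqExp{\epsilon} = \bY^\transpose\bY \stackrel{D}{\to} \chi^2_{d-1}$.

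For the alternate $\althyp:\bp=\bp^1_n$, note that $\check{\bp^1_n} = \check{\nullbp} + \frac{1}{\sqrt{n}}\cdot\frac{e^\epsilon-1}{e^\epsilon+d-1}\bDelta$. The CLT now applies to a triangular array, but Lindeberg's condition holds trivially since $\check{\bp^1_n}\to \check{\nullbp}$, so
\begin{equation*}
\frac{\check{\bH}-n\check{\nullbp}}{\sqrt{n}} \stackrel{D}{\to} \Normal{\frac{e^\epsilon-1}{e^\epsilon+d-1}\bDelta}{\diag(\check{\nullbp})-\check{\nullbp}(\check{\nullbp})^\transpose}.
\end{equation*}
Coordinate-wise rescaling gives $\bY \stackrel{D}{\to} \Normal{\pmb{\mu}}{I_d - \sqrt{\check{\nullbp}}\sqrt{\check{\nullbp}}^\transpose}$, where $\mu_j = \frac{e^\epsilon-1}{e^\epsilon+d-1}\cdot \Delta_j/\sqrt{\check{p}_j^0}$. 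The projection hypothesis $\Sigma\pmb{\mu}=\pmb{\mu}$ in \Cref{thm:classical} reduces to $\sqrt{\check{\nullbp}}^\transpose\pmb{\mu}=0$, which becomes $\frac{e^\epsilon-1}{e^\epsilon+d-1}\sum_j \Delta_j = 0$ and holds by $\one^\transpose\bDelta=0$. Therefore $\chisqExp{\epsilon}\stackrel{D}{\to}\chi^2_{d-1}(\pmb{\mu}^\transpose\pmb{\mu})$ with noncentrality $\pmb{\mu}^\transpose\pmb{\mu} = \left(\frac{e^\epsilon-1}{e^\epsilon+d-1}\right)^2 \sum_{j=1}^d \Delta_j^2/\check{p}_j^0$, as claimed.

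The only delicate point is the triangular-array CLT under the drifting alternative, but since the per-coordinate variances and third moments of $\multRR(\bX_i,\epsilon)$ are uniformly bounded as $n\to\infty$ (the law $\mult(1,\check{\bp^1_n})$ converges to $\mult(1,\check{\nullbp})$ with bounded support), the Lindeberg condition is immediate; everything else is bookkeeping of the shift from $\bDelta$ to $\frac{e^\epsilon-1}{e^\epsilon+d-1}\bDelta$ induced by the exponential mechanism.
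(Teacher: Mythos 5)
Your proposal is correct and follows essentially the same route as the paper's proof: apply the CLT to the privatized histogram $\check{\bH}$, rescale coordinate-wise so the limiting covariance is the rank-$(d-1)$ projection $I_d - \sqrt{\check{\nullbp}}\sqrt{\check{\nullbp}}^\transpose$, and invoke \Cref{thm:classical} with $\pmb{\mu}=\pmb{0}$ under $\nullhyp$ and with the shifted mean under $\althyp$, checking $\Sigma\pmb{\mu}=\pmb{\mu}$ via $\one^\transpose\bDelta=0$. Your treatment is in fact slightly more careful than the paper's (explicitly handling the triangular-array CLT under the drifting alternative and writing the rescaling unambiguously), but the underlying argument is the same.
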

 \begin{proof}
 Let $\bV = \sqrt{n} \left(\frac{\check{\bH}/n - \check{\nullbp} }{\check{\nullbp}} \right)$, which converges in distribution to $\Normal{\pmb{0}}{I_d - \sqrt{\check{\nullbp}} \sqrt{\check{\nullbp}}^\intercal}$, by the central limit theorem.  We then apply \Cref{thm:classical} with $\pmb{\mu} = \pmb{0}$ and $I_d - \sqrt{\check{\nullbp}} \sqrt{\check{\nullbp}}^\intercal$ being a projection matrix of rank $d-1$.  
 
 In the second statement we assume the alternate $\bp = \bp^1$ holds, in which case $\bV$ converges in distribution to  $\Normal{\left( \frac{e^\epsilon - 1}{e^\epsilon + d - 1} \right) \frac{\bDelta}{\check{\nullbp}}}{I_d - \sqrt{\check{\nullbp}} \sqrt{\check{\nullbp}}^\intercal}$.  We then verify that $\left( \frac{e^\epsilon - 1}{e^\epsilon + d - 1} \right)\left(I_d - \sqrt{\check{\nullbp}} \sqrt{\check{\nullbp}}^\intercal \right)\frac{\Delta}{\check{\nullbp}} =\left( \frac{e^\epsilon - 1}{e^\epsilon + d - 1} \right) \frac{\Delta}{\check{\nullbp}}$ to prove the second statement.
\end{proof}

We then base our LDP goodness of fit test on this result to obtain the correct critical value to reject the null hypothesis based on a chi-square distribution.  The test is presented in \Cref{alg:ExpGOF}. 
\begin{algorithm}
\caption{Local DP GOF Test: $\ExpGOF$}
\label{alg:ExpGOF}
\begin{algorithmic}
\REQUIRE $\bbx = (\bbx_1,\cdots, \bbx_n)$, $\epsilon$, $\alpha$, $\nullhyp: \bp = \nullbp$.
\STATE Let $\check{\bp}^0 = \frac{1}{e^\epsilon + d-1}\left( e^\epsilon \nullbp   + (1-\nullbp)\right)$.
\STATE Let $\check{\bH} = \sum_{i=1}^n \multRR(\bbx_i,\epsilon)$.  
\STATE Set $ q = \sum_{j=1}^d \frac{(\check{h}_j - n \check{p}_j^0)^2}{n \check{p}_j^0}$
\STATE {\bf if } $q > \chi_{d-1,1-\alpha}^2$
	Decision $\gets $ Reject.
\STATE {\bf else } 
	 Decision $\gets$ Fail to Reject.
\ENSURE Decision
\end{algorithmic}
\end{algorithm}
The following result is immediate from the exponential mechanism being $\epsilon$-DP and the fact that we use it as a local randomizer.
\begin{theorem}
\emph{$\ExpGOF$} is $\epsilon$-LDP.
\end{theorem}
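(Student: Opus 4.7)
The plan is to verify the theorem directly against the definition of $(\epsilon,\delta)$-LDP, with $\delta=0$, by checking the two ingredients required: that the per-individual randomizer invoked by $\ExpGOF$ is $\epsilon$-DP, and that the rest of the algorithm is pure post-processing of those randomized outputs.

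First I would verify that the exponential mechanism $\multRR(\cdot,\epsilon)$ of \Cref{alg:multRR} is $\epsilon$-DP on its single input. Since the quality function is $q(\bbx,\bbz) = \1\{\bbx = \bbz\}$, it has sensitivity $1$ in $\bbx$ (changing $\bbx$ to any other basis element flips the indicator on at most two coordinates of $\bbz$, but for any fixed $\bbz$ the quality changes by at most $1$). The normalizing constant $e^\epsilon - 1 + d$ equals $\sum_{\bbz \in \{\bbe_1,\ldots,\bbe_d\}} \exp(\epsilon\, q(\bbx,\bbz))$ and is invariant in $\bbx$, so for neighboring inputs $\bbx, \bbx'$ and any output $\check{\bbx}$ the ratio of output probabilities is $\exp(\epsilon(q(\bbx,\check{\bbx}) - q(\bbx',\check{\bbx}))) \le e^\epsilon$, giving $\epsilon$-DP.

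Next I would check the LDP oracle-access structure. The only place $\ExpGOF$ touches the dataset $\bbx$ is in the line $\check{\bH} = \sum_{i=1}^n \multRR(\bbx_i,\epsilon)$, which can be implemented by, for each $i \in [n]$, a single invocation $LR_{\bbx}(i, \multRR(\cdot,\epsilon))$ of the local randomizer oracle. For every index $i$ there is thus $k=1$ invocation with per-invocation privacy parameter $\epsilon_1 = \epsilon$, so $\sum_{j=1}^{k} \epsilon_j = \epsilon$ as required by the LDP definition.

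Finally, every remaining operation in $\ExpGOF$ (aggregating the randomized responses into $\check{\bH}$, computing the test statistic $q$, looking up the critical value $\chi^2_{d-1,1-\alpha}$, and outputting the decision) depends on $\bbx$ only through $\check{\bH}$, and hence only through the outputs of the LR oracle. These steps are post-processing and do not touch the raw data, so they cannot degrade the LDP guarantee. There is no real obstacle here; the statement follows by assembling the three observations above, and I would expect the written proof to be essentially one or two sentences, as the authors indicate.
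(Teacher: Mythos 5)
Your proposal is correct and follows the same route as the paper: the paper's proof is a one-sentence observation that $\multRR$ is $\epsilon$-DP, is applied once per individual as a local randomizer, and that the aggregation and test computation are post-processing. Your version simply spells out each of these three ingredients (including the explicit verification that the exponential mechanism with the indicator quality function is $\epsilon$-DP) in more detail than the authors chose to.
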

\begin{proof}
The proof follows from the fact that we use $\multRR$ for each individual's data and then $\ExpGOF$ aggregates the privatized data, which is just a post-processing function.  
\end{proof}

\subsection{Goodness of Fit Test with Bit Flipping\label{sect:BitFlip}}
Note that the noncentral parameter in \Cref{thm:ExpChiTest} goes to zero as $d$ grows large due to the coefficient being $\left(\frac{e^\epsilon - 1}{e^\epsilon + d-1} \right)^2$.  Thus, for large dimensional data the exponential mechanism cannot reject a false null hypothesis.  We next consider another differentially private algorithm $\alg:\{\bbe_1,\cdots, \bbe_d \} \to \{0,1\}^d$, given in \Cref{alg:localJL} used in \cite{BS15} that flips each bit with some biased probability.


\begin{algorithm}
\caption{Bit Flip Local Randomizer: $\Mbit$}
\label{alg:localJL}
\begin{algorithmic}
\REQUIRE $\bbx \in \{\bbe_1,\cdots, \bbe_d\}$, $\epsilon$.
\FOR{$ j \in [d]$}
\STATE Set $z_j = x_j$ with probability $\frac{e^{\epsilon/2}}{e^{\epsilon/2}+1}$, otherwise $z_j = (1-x_j)$.  
\ENDFOR
\ENSURE $\bbz$
\end{algorithmic}
\end{algorithm}

\begin{theorem}
The algorithm $\Mbit$ is $\epsilon$-DP.  
\end{theorem}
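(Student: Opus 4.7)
The plan is a direct calculation of the ratio of output densities. Since $\Mbit$ is a local randomizer that takes a single input $\bbx\in\{\bbe_1,\dots,\bbe_d\}$, I need to show that for any two possible inputs $\bbx,\bbx'$ and any output $\bbz\in\{0,1\}^d$,
\[
\frac{\Prob{}{\Mbit(\bbx)=\bbz}}{\Prob{}{\Mbit(\bbx')=\bbz}}\;\leq\; e^\epsilon.
\]
The key structural observation is that two distinct basis vectors $\bbe_i$ and $\bbe_j$ (with $i\neq j$) differ in \emph{exactly two} coordinates, namely $i$ and $j$; they agree on all other coordinates.

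First I would use independence of the coordinate-wise bit flips to factor both probabilities as products over $j\in[d]$. Then, because $\bbx$ and $\bbx'$ agree on all coordinates outside $\{i,j\}$, all but (at most) two factors cancel in the ratio, so the problem reduces to bounding the contribution of coordinates $i$ and $j$.

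Next, for each of those two coordinates I would invoke the elementary fact that a single biased bit flip with parameter $\frac{e^{\epsilon/2}}{e^{\epsilon/2}+1}$ is the standard $(\epsilon/2)$-DP randomized response: for any bit $b$ and any output $z\in\{0,1\}$,
\[
\frac{\Prob{}{z\mid b}}{\Prob{}{z\mid 1-b}}\in\left\{e^{\epsilon/2},\,e^{-\epsilon/2}\right\}.
\]
Applying this bound to the coordinate-$i$ factor and again to the coordinate-$j$ factor gives a product bounded by $e^{\epsilon/2}\cdot e^{\epsilon/2}=e^\epsilon$, which is exactly the required inequality. The reverse direction follows symmetrically by swapping $\bbx$ and $\bbx'$, so the bound holds in both directions.

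There is no serious obstacle here; the only place that one must be careful is the bookkeeping that two basis-vector inputs differ in two coordinates rather than one, which is why the per-coordinate randomized response is tuned to $\epsilon/2$ (not $\epsilon$) so that the two contributions compose to $e^\epsilon$. Conceptually this is just the basic composition of two independent $(\epsilon/2)$-DP mechanisms applied to the two coordinates on which the inputs disagree.
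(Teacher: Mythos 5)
Your proof is correct and complete: the paper itself states this theorem without proof (implicitly deferring to the source of the bit-flip mechanism, \citet{BS15}), and your argument is exactly the standard one that fills that gap. The key bookkeeping point you identify --- that two basis vectors $\bbe_i, \bbe_{i'}$ differ in exactly two coordinates, so the per-coordinate randomized response must be calibrated to $\epsilon/2$ for the two non-cancelling factors to compose to $e^\epsilon$ --- is precisely the reason $\Mbit$ uses the flip probability $\frac{e^{\epsilon/2}}{e^{\epsilon/2}+1}$ rather than $\frac{e^{\epsilon}}{e^{\epsilon}+1}$, and your factorization-and-cancellation argument establishes the bound in both directions as required.
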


We then want to form a statistic based on the output $\bbz \in \{0,1\}^d$  that is asymptotically distributed as a chi-square under the null hypothesis.  We defer the proof to the appendix.
\begin{lemma}
We consider data $\bX_i \sim \mult(1,\bp)$ for each $i \in [n]$.  We define the following covariance matrix $\covarJL(\bp)$ and mean vector $\tilde{\bp} = \frac{ \left[ \left(e^{\epsilon/2}-1\right)\bp +1 \right]}{e^{\epsilon/2}+1}$, in terms of $\alpha_\epsilon =  \left(\frac{e^{\epsilon/2}-1}{e^{\epsilon/2}+1} \right)$
\begin{align}
 \covarJL(\bp) = & \alpha_\epsilon^2 \left[ \diag\left( \bp \right) - \bp\left( \bp \right)^\transpose \right] +\frac{e^{\epsilon/2}}{\left(e^{\epsilon/2}+1\right)^2} I_d
\label{eq:covarBitFlip}
\end{align}
The histogram $\tilde{\bH} =  \sum_{i=1}^n \Mbit(\bX_i)$ has the following asymptotic distribution
$
\sqrt{n} \left( \frac{\tilde{\bH}}{n}  - \tilde{\bp}\right) \stackrel{D}{\to} \Normal{ \pmb{0} }{\covarJL(\bp)}.
$
Further, $\covarJL(\bp)$ is invertible for any $\epsilon > 0$ and $\bp >\pmb{0}$.
\label{lem:technical}
\end{lemma}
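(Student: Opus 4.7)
The plan is to compute the per-sample mean and covariance of $\Mbit(\bX_i)$ and then invoke the multivariate CLT. The key reformulation is to express the randomizer as $Z_{i,j}:=\Mbit(\bX_i)_j = X_{i,j} \oplus Y_{i,j} = X_{i,j}+Y_{i,j}-2X_{i,j}Y_{i,j}$, where $Y_{i,j}\sim\Bern(c)$ with $c=1/(e^{\epsilon/2}+1)$ is drawn independently across $i,j$ and independently of $\bX$. Note that $\alpha_\epsilon = 1-2c$, which keeps the algebra clean.

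First I would verify the mean: using independence and $E[X_{i,j}]=p_j$, one gets $E[Z_{i,j}] = p_j + c(1-2p_j) = \alpha_\epsilon p_j + c$, which matches the $j$th coordinate of $\tilde{\bp}$. Next I would compute the covariance of the vector $\bZ_i=\Mbit(\bX_i)$ coordinate by coordinate. For the diagonal, since $Z_{i,j}$ is $\{0,1\}$-valued with mean $\tilde{p}_j$, we have $\var{Z_{i,j}} = \tilde{p}_j(1-\tilde{p}_j)$; expanding $\tilde{p}_j=\alpha_\epsilon p_j+c$ and using $c(1-c)=e^{\epsilon/2}/(e^{\epsilon/2}+1)^2$, this rearranges to $\alpha_\epsilon^2 p_j(1-p_j) + e^{\epsilon/2}/(e^{\epsilon/2}+1)^2$, which is the diagonal of $\covarJL(\bp)$. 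For $j\neq k$, the crucial fact is that $\bX_i\sim\mult(1,\bp)$ gives $X_{i,j}X_{i,k}=0$ almost surely; expanding $Z_{i,j}Z_{i,k}$ and taking expectations, and using independence of $Y_{i,j},Y_{i,k}$ from each other and from $\bX_i$, the cross term collapses to $\Cov(Z_{i,j},Z_{i,k})=-\alpha_\epsilon^2 p_j p_k$, matching the off-diagonal of $\alpha_\epsilon^2(\diag(\bp)-\bp\bp^\transpose)$.

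With the mean $\tilde{\bp}$ and covariance $\covarJL(\bp)$ of a single $\Mbit(\bX_i)$ in hand, and because $\{\Mbit(\bX_i)\}_{i=1}^n$ are i.i.d. with bounded entries, the multivariate central limit theorem immediately gives $\sqrt{n}\left(\tilde{\bH}/n - \tilde{\bp}\right) \stackrel{D}{\to} \Normal{\pmb{0}}{\covarJL(\bp)}$.

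For invertibility, I would write $\covarJL(\bp) = \alpha_\epsilon^2\left(\diag(\bp)-\bp\bp^\transpose\right) + c(1-c)I_d$. The first summand is positive semidefinite: for any $\bbv\in\R^d$, $\bbv^\transpose(\diag(\bp)-\bp\bp^\transpose)\bbv = \sum_j p_j v_j^2 - \left(\sum_j p_j v_j\right)^2 \geq 0$ by Jensen's inequality (it is the variance of a random variable taking value $v_j$ with probability $p_j$). Since $\epsilon>0$ implies $c(1-c)>0$, the second summand is a strictly positive multiple of the identity, so the sum is positive definite, hence invertible. The only real bookkeeping hurdle is the off-diagonal covariance computation; every other step is either a direct application of the CLT or standard linear algebra.
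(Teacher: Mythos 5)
Your proof is correct and follows the same basic strategy as the paper's: compute the mean and covariance of a single output of $\Mbit$ and invoke the multivariate CLT for the i.i.d.\ sum. The one algebraic difference is your decomposition $\Mbit(\bX_i)_j = X_{i,j}\oplus Y_{i,j}$ with an independent $\Bern\left(1/(e^{\epsilon/2}+1)\right)$ flip variable, which makes the diagonal entries immediate (Bernoulli variance $\tilde{p}_j(1-\tilde{p}_j)$) and reduces the off-diagonal computation to one expansion using $X_{i,j}X_{i,k}=0$; the paper instead conditions directly on the four joint outcomes of $(X_j,X_k)$ and simplifies, arriving at the same $\alpha_\epsilon^2\left[\diag(\bp)-\bp\bp^\transpose\right]+\frac{e^{\epsilon/2}}{(e^{\epsilon/2}+1)^2}I_d$. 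More substantively, your argument for invertibility --- writing $\covarJL(\bp)$ as a positive semidefinite matrix plus the strictly positive multiple $c(1-c)I_d$ of the identity, hence positive definite --- actually covers a claim of the lemma that the paper's appendix proof asserts but never establishes, so on that point your write-up is more complete than the original.
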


Following a similar analysis in \cite{KR17} and using \Cref{thm:classical}, we can form the following statistic for null hypothesis $\nullhyp: \bp  =\nullbp$ in terms of the histogram $\tilde{\bH}$ and projection matrix $\Pi =  I_d - \frac{1}{d} \one \one^\transpose$, as well as the covariance matrix $\covarJL\left(\nullbp\right)$ and mean vector $\tilde{\bp}^0$ both given in \eqref{eq:covarBitFlip} where we replace $\bp$ with $\nullbp$:
\begin{equation}
\chisqBF{\epsilon} = n \left( \frac{\tilde{\bH}}{n} - \tilde{\bp}^0 \right)^\transpose\Pi \covarJL\left(\nullbp\right)^{-1}\Pi  \left( \frac{\tilde{\bH}}{n} - \tilde{\nullbp}]\right)
\label{eq:chisqbitflip}
\end{equation}
We can then design a hypothesis test based on the outputs from $\Mbit$ in \Cref{alg:BitGOF}
\begin{algorithm}
\caption{Local DP GOF Test: $\BitGOF$}
\label{alg:BitGOF}
\begin{algorithmic}
\REQUIRE $\bbx = (\bbx_1,\cdots, \bbx_n)$, $\epsilon$, $\alpha$, $\nullhyp: \bp = \nullbp$.
\STATE Let $\tilde{\bH} = \sum_{i=1}^n \Mbit(\bbx_i,\epsilon)$.  
\STATE Set $ q = \chisqBF{\epsilon}$
\STATE {\bf if } $q > \chi_{d-1,1-\alpha}^2$
	 Decision $\gets $ Reject.
\STATE {\bf else }
	 Decision $\gets$ Fail to Reject.
\ENSURE Decision
\end{algorithmic}
\end{algorithm}

\begin{theorem}
\emph{$\BitGOF$} is $\epsilon$-LDP.
\end{theorem}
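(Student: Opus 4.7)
The plan is to mirror the short proof templates used for $\GaussGOF$ and $\ExpGOF$, since the theorem is again a statement that the overall pipeline inherits local differential privacy from the per-user randomizer it calls. Concretely, I would invoke the earlier theorem stating that $\Mbit$ is $\epsilon$-DP, note that $\BitGOF$ accesses the dataset only through independent invocations of $\Mbit(\bbx_i,\epsilon)$, one per index $i\in[n]$, and then argue that the rest of the algorithm is post-processing.

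First, I would recall the LDP definition: an algorithm is $\epsilon$-LDP if every query it makes to the local randomizer oracle $LR_{\bbx}$ on the same index $i$ is via mechanisms whose privacy parameters sum to at most $\epsilon$. In $\BitGOF$, for each $i\in[n]$ we invoke $\Mbit(\bbx_i,\epsilon)$ exactly once, and by the preceding theorem that randomizer is $\epsilon$-DP. Hence the per-index budget is exactly $\epsilon$, satisfying the definition.

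Second, I would note that after obtaining the privatized responses $\bbz_i = \Mbit(\bbx_i,\epsilon)$, the remainder of $\BitGOF$ (forming $\tilde{\bH}=\sum_i \bbz_i$, computing the quadratic form $\chisqBF{\epsilon}$ in \eqref{eq:chisqbitflip}, and comparing to the critical value $\chi^2_{d-1,1-\alpha}$) uses only the privatized outputs and the public quantities $\nullbp, \epsilon, \alpha$. This is pure post-processing of the LDP view, which cannot degrade the privacy guarantee.

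There is essentially no obstacle here: the argument is a one-line invocation of the $\epsilon$-DP property of $\Mbit$ together with the post-processing immunity of differential privacy, exactly as was done for $\ExpGOF$ using $\multRR$ and for $\GaussGOF$ using Gaussian/Laplace noise. Thus the entire written proof should be two or three sentences, structurally identical to the proof of the $\ExpGOF$ privacy theorem earlier in the section.
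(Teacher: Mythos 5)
Your proposal is correct and matches the paper's approach: the paper states this theorem without a written proof, but its proofs of the analogous privacy claims for $\GaussGOF$ and $\ExpGOF$ are exactly the argument you give — each individual's data is accessed once through the $\epsilon$-DP local randomizer ($\Mbit$ here), and the aggregation and test statistic are post-processing. Nothing is missing.
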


We now show that the statistic in \eqref{eq:chisqbitflip} is
asymptotically distributed as $\chi_{d-1}^2$. 

\begin{theorem}
If the null hypothesis $\nullhyp:\bp = \nullbp$ holds, then the statistic $\chisqBF{\epsilon}$ 
is asymptotically distributed as a chi-square, i.e. $\chisqBF{\epsilon}\stackrel{D}{\to}\chi^2_{d-1}.$  Further, if we consider the alternate $\althyp: \bp = \bp^1$  then $\chisqBF{\epsilon}\stackrel{D}{\to}\chi^2_{d-1}\left(\left(\frac{e^{\epsilon/2} -1}{e^{\epsilon/2}+1}\right)^2 \cdot \bDelta^\transpose\covarJL(\nullbp)^{-1}\bDelta \right).$ 
\end{theorem}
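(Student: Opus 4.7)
The plan is to identify the joint asymptotic distribution of the centered noisy histogram and then recognize the statistic as the squared Euclidean norm of an asymptotically Gaussian vector whose limiting covariance is a projection matrix, so that \Cref{thm:classical} applies directly. Set $C := \covarJL(\nullbp)$ and $\bV_n := \sqrt{n}(\tilde{\bH}/n - \tilde{\bp}^0)$, and observe that $\chisqBF{\epsilon} = \bV_n^\transpose \Pi C^{-1} \Pi \bV_n = \|C^{-1/2}\Pi \bV_n\|^2$ since $C$ is symmetric positive definite (by \Cref{lem:technical}).

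First I would invoke \Cref{lem:technical} to get, under $\nullhyp$, $\bV_n \stackrel{D}{\to} \Normal{\pmb{0}}{C}$. Under the local alternative $\bp^1_n = \nullbp + \bDelta/\sqrt{n}$, a direct substitution into the formula $\tilde{\bp} = [(e^{\epsilon/2}-1)\bp + \one]/(e^{\epsilon/2}+1)$ gives $\tilde{\bp}^1_n - \tilde{\bp}^0 = (\alpha_\epsilon/\sqrt{n})\bDelta$ with $\alpha_\epsilon = (e^{\epsilon/2}-1)/(e^{\epsilon/2}+1)$. Combined with the continuity $\covarJL(\bp^1_n) \to C$ and Slutsky's theorem, this yields $\bV_n \stackrel{D}{\to} \Normal{\alpha_\epsilon \bDelta}{C}$.

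The crucial structural step is the eigenvector identity $C\one = c\one$ with $c = e^{\epsilon/2}/(e^{\epsilon/2}+1)^2$, which I would verify by direct computation: the block $\diag(\nullbp) - \nullbp(\nullbp)^\transpose$ annihilates $\one$ because $\diag(\nullbp)\one = \nullbp$ and $(\nullbp)^\transpose\one = 1$, and the scalar multiple of $I_d$ contributes $c\one$. Since $C$ is symmetric with $\one$ an eigenvector, both $C^{1/2}$ and $C^{-1/2}$ preserve the orthogonal decomposition $\R^d = \mathrm{span}(\one) \oplus \one^\perp$. I would then verify the matrix identity $C^{-1/2}\Pi C \Pi C^{-1/2} = \Pi$: vectors in $\mathrm{span}(\one)$ are killed because $C^{-1/2}\one \in \mathrm{span}(\one)$ is annihilated by $\Pi$, while vectors in $\one^\perp$ are fixed by $\Pi$ and the $C^{\pm 1/2}$ factors cancel. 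Consequently $C^{-1/2}\Pi \bV_n$ has limiting covariance exactly $\Pi$.

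Under $\nullhyp$, \Cref{thm:classical} then applies with $\pmb{\mu} = \pmb{0}$ and $\Sigma = \Pi$ (a projection of rank $d-1$), giving $\chisqBF{\epsilon} \stackrel{D}{\to} \chi^2_{d-1}$. Under $\althyp$, the limit of $C^{-1/2}\Pi\bV_n$ has mean $\alpha_\epsilon C^{-1/2}\bDelta$ (using $\Pi\bDelta = \bDelta$ since $\one^\transpose\bDelta = 0$) and covariance $\Pi$; because $C^{-1/2}\bDelta \in \one^\perp$, the consistency hypothesis $\Sigma\pmb{\mu} = \pmb{\mu}$ of \Cref{thm:classical} holds, so the noncentrality parameter equals $\pmb{\mu}^\transpose\pmb{\mu} = \alpha_\epsilon^2\, \bDelta^\transpose C^{-1}\bDelta$, matching the stated expression. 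The main obstacle is the projection identity in the third paragraph: without first recognizing that $\one$ is an eigenvector of $\covarJL(\nullbp)$, the matrix $\Pi C^{-1}\Pi$ would not act as the Moore--Penrose pseudoinverse of $\Pi C\Pi$, and the limit would be a weighted sum of chi-squares rather than the clean $\chi^2_{d-1}$.
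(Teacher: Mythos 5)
Your proposal is correct and follows essentially the same route as the paper's proof: invoke \Cref{lem:technical} for asymptotic normality, exploit the fact that $\one$ is an eigenvector of $\covarJL(\nullbp)$ so that the whitened, projected vector has limiting covariance equal to a rank-$(d-1)$ projection, and then apply \Cref{thm:classical} under both hypotheses. If anything, you are more explicit than the paper in verifying the eigenvalue identity $C\one = \frac{e^{\epsilon/2}}{(e^{\epsilon/2}+1)^2}\one$, the identity $C^{-1/2}\Pi C \Pi C^{-1/2} = \Pi$, and the consistency condition $\Sigma\pmb{\mu} = \pmb{\mu}$ needed for the noncentral case.
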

\begin{proof}
From \Cref{lem:technical}, we know that $\sqrt{n} \left( \frac{\tilde{\bH}}{n} - \tilde{\nullbp}  \right) \stackrel{D}{\to} \Normal{\pmb{0}}{\Sigma(\tilde{\nullbp})}$.  Since $\Sigma(\tilde{\nullbp})$ is full rank and has an eigenvector that is all ones.  Thus, we can diagonalize the covariance matrix as $\Sigma(\tilde{\nullbp}) = B D B^\intercal$ where $D$ is a diagonal matrix and $B$ has orthogonal columns with one of them being $\frac{1}{d} \pmb{1}$.  Thus, we have 
$$
\sqrt{n} \Sigma(\tilde{\nullbp})^{-1/2} \Pi  \left( \frac{\tilde{\bH}}{n} - \tilde{\nullbp}  \right) \stackrel{D}{\to} \Normal{\pmb{0}}{\Lambda}
$$
where $\Lambda = \Sigma(\tilde{\nullbp})^{-1/2} \Pi BDB^\intercal \Pi \Sigma(\tilde{\nullbp})^{-1/2}$.  Hence, $\Lambda$ will be the identity matrix with a single zero on the diagonal.  We then apply \Cref{thm:classical} tp prove the first statement.

To prove the second statement we assume that $\bp = \bp^1$ holds, which gives $\sqrt{n} \left( \frac{\tilde{\bH}}{n} - \tilde{\nullbp}  \right) \stackrel{D}{\to} \Normal{ \alpha_\epsilon \cdot \bDelta }{\Sigma(\tilde{\nullbp})}$.  Thus, 
$$
\sqrt{n} \Sigma(\tilde{\nullbp})^{-1/2} \Pi  \left( \frac{\tilde{\bH}}{n} - \tilde{\nullbp}  \right) \stackrel{D}{\to} \Normal{ \alpha_\epsilon \cdot \Sigma(\tilde{\nullbp})^{-1/2}\bDelta }{\Lambda}
$$
We again use \Cref{thm:classical} to obtain the noncentral parameter $\alpha_\epsilon^2 \cdot \bDelta^\intercal \Sigma(\tilde{\nullbp} )^{-1} \bDelta$.
\end{proof}

\subsection{Comparison of Noncentral Parameters}

We now compare the noncentral parameters of the three local private tests we presented in \Cref{alg:LocalGaussGOF,alg:ExpGOF,alg:BitGOF}.  We consider the null hypothesis $\nullbp = (1/d,\cdots, 1/d)$ for $d > 2$, and alternate $\althyp:\bp = \nullbp + \frac{\bDelta}{\sqrt{n}}$.  In this case, we can easily compare the various noncentral parameters for various privacy parameters and dimensions $d$.  In \Cref{fig:noncentrals} we give the coefficient to the term $\bDelta^\transpose \bDelta$ in the noncentral parameter of the asymptotic distribution for each local private test presented thus far.  The larger this coefficient is, the better the power will be for any alternate $\bDelta$ vector.  Note that in $\GaussGOF$, we set $\rho = \epsilon^2/8$ which makes the variance the same as for a random variable distributed as $\Lap(2/\epsilon)$ for an $\epsilon$-DP guarantee -- recall that $\GaussGOF$ with Gaussian noise does not satisfy $\epsilon$-DP for any $\epsilon>0$.  We give results for $\epsilon \in \{1,2,3,4\}$ which are all in the range of privacy parameters that have been considered in actual locally differentially private algorithms used in practice.\footnote{In \cite{EPK14}, we know that Google uses $\epsilon = \ln(3) \approx 1.1$ in RAPPOR and from Aleksandra Korolova's Twitter post on September 13, 2016 \url{https://twitter.com/korolova/status/775801259504734208} we know that Apple uses $\epsilon = 1,4$.  }  From the plots, we see how  $\ExpGOF$ may outperform $\BitGOF$ depending on the privacy parameter and dimension of the data.   We can use these plots to determine which test to use given $\epsilon$ and dimension of data $d$.  When $\nullhyp$ is not uniform, we can use the noncentral parameters given for each test to determine which test has the largest noncentral parameter for a particular privacy budget $\epsilon$.

\begin{figure}[t!]
\begin{subfigure}{.245\textwidth}
  \centering
  \includegraphics[width=\linewidth]{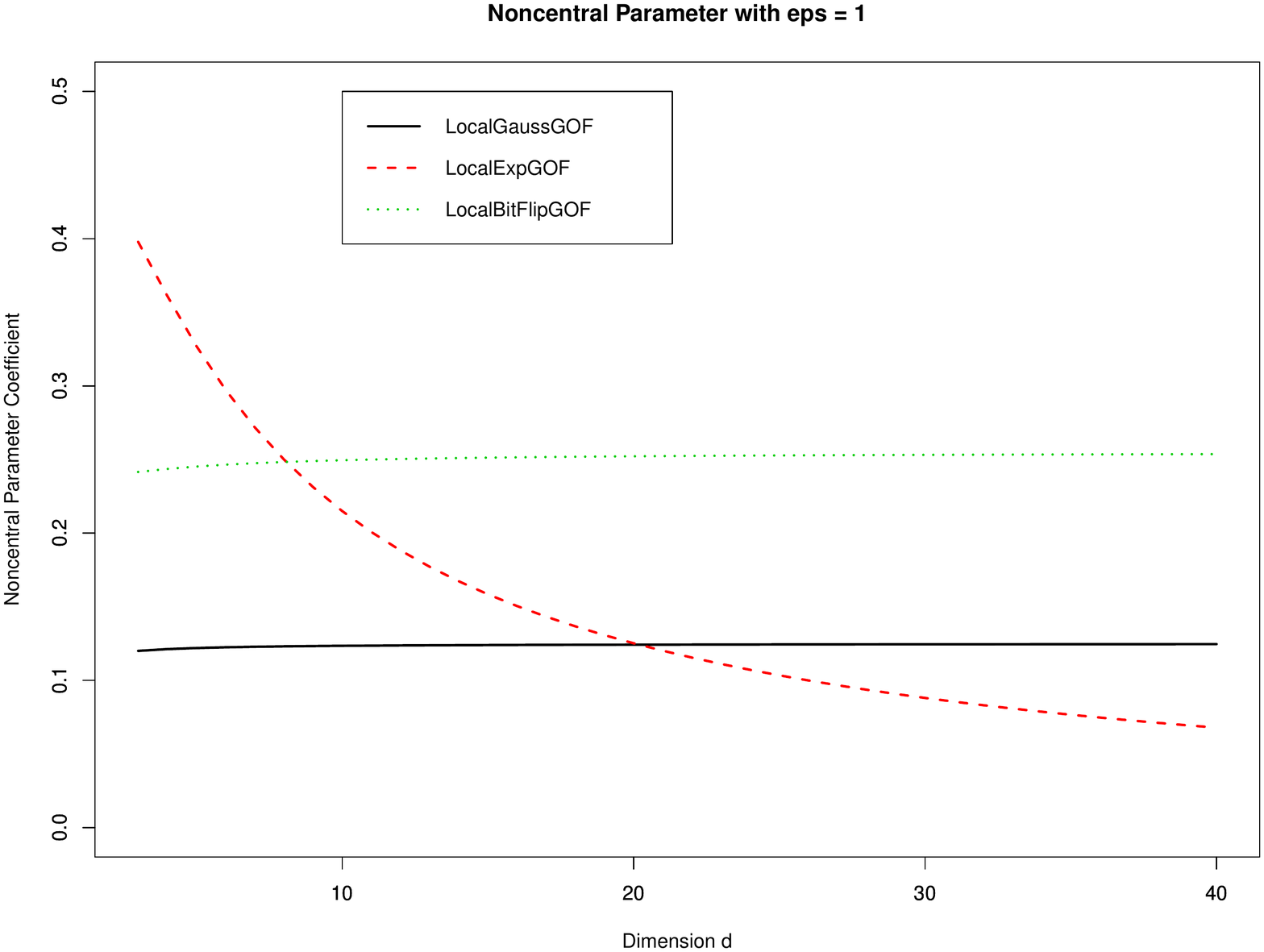}
\end{subfigure}%
\hfill
\begin{subfigure}{.245\textwidth}
  \centering
    \includegraphics[width=\linewidth]{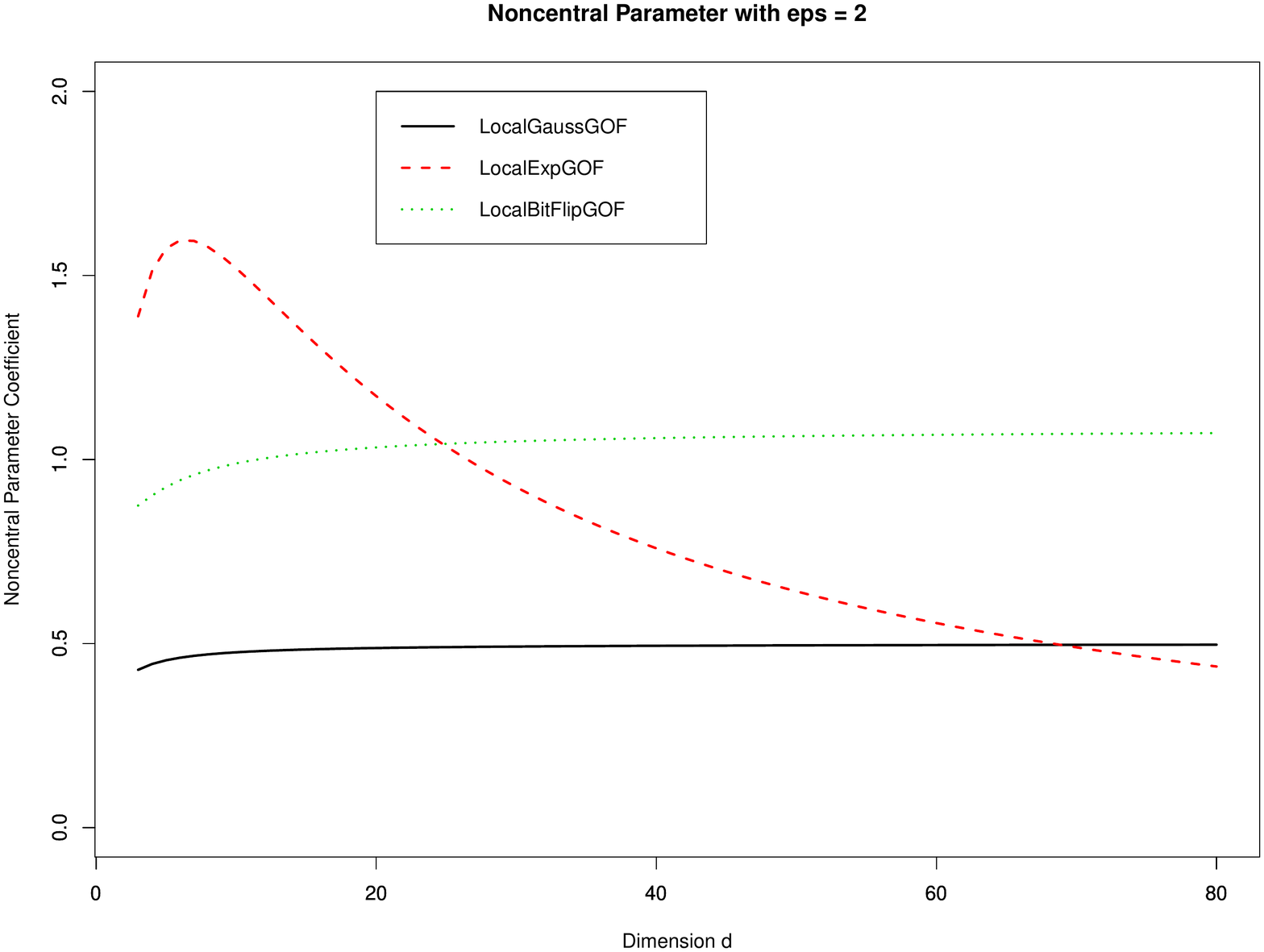}
\end{subfigure}
\begin{subfigure}{.245\textwidth}
  \centering
  \includegraphics[width=\linewidth]{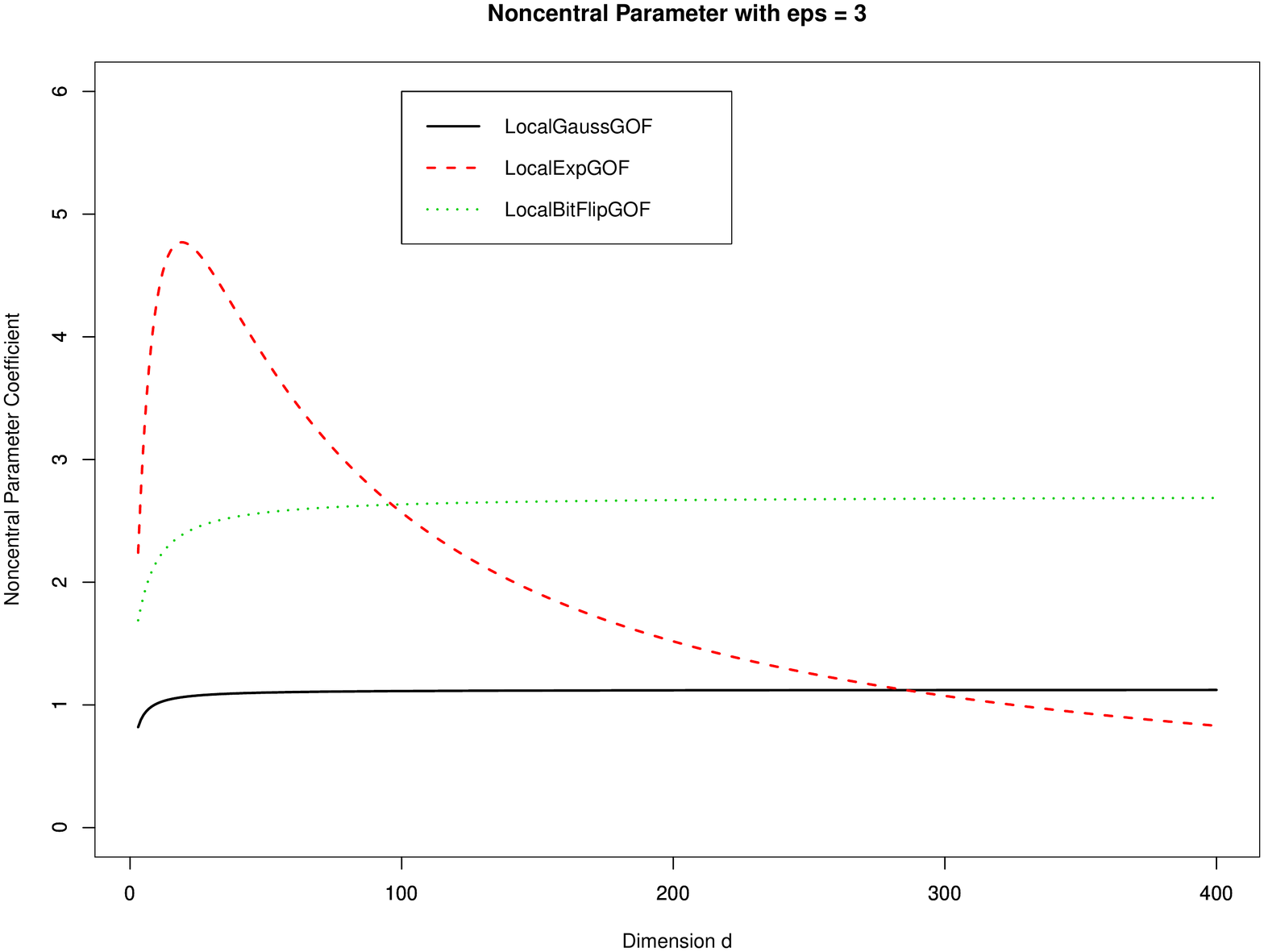}
\end{subfigure}%
\begin{subfigure}{.245\textwidth}
  \centering
  \includegraphics[width=\linewidth]{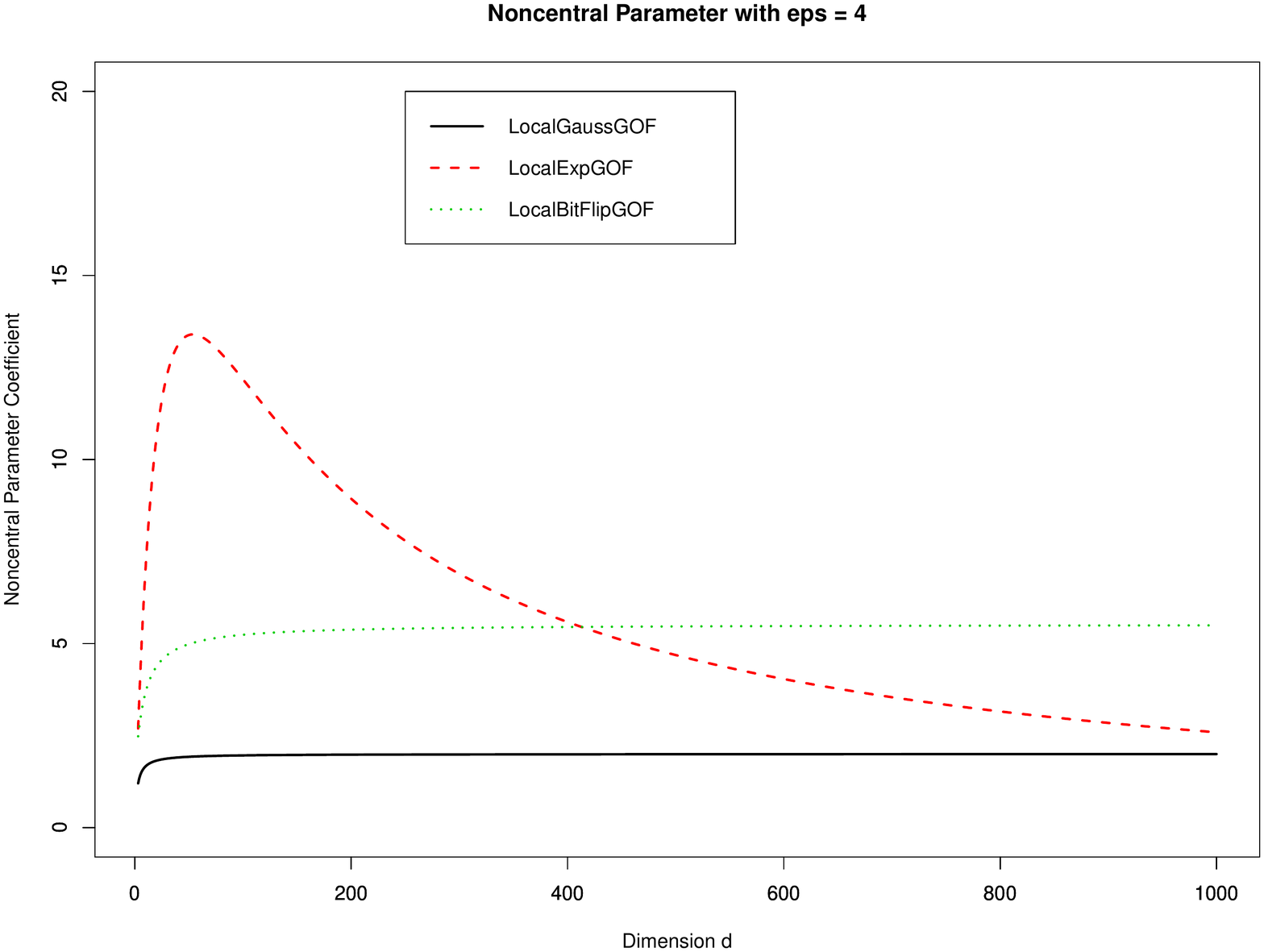}
\end{subfigure}%
\caption{The coefficient on $\bDelta^\transpose \bDelta$ in the noncentral parameter for the local private tests where ``LocalGaussGOF" is $\GaussGOF$ with Gaussian noise, $\ExpGOF$, and $\BitGOF$ for various dimensions $d$ and $\epsilon$.}
\label{fig:noncentrals}
 \end{figure}

\subsection{Empirical Results}

We then empirically compare the power between $\GaussGOF$ with Laplace noise in \Cref{alg:LocalGaussGOF}, $\ExpGOF$ in \Cref{alg:ExpGOF}, and $\BitGOF$ in \Cref{alg:BitGOF}.  Recall that all three of these tests have the same privacy benchmark of local differential privacy.  For $\GaussGOF$ with Laplace noise, we will use $m = 999$ samples in out Monte Carlo simulations.  In our experiments we fix $\alpha = 0.05$ and $\epsilon \in \{1,2,4\}$.  We then consider null hypotheses of the form $\nullbp = (1/d,1/d, \cdots, 1/d)$ and alternate $\althyp: \bp = \nullbp + \eta (1,-1,\cdots, 1,-1)$ for some $\eta>0$.  In \Cref{fig:LocalDPGOFPower}, we plot the number of times our tests correctly rejects the null hypothesis in 1000 independent trials for various sample sizes $n$ and privacy parameters $\epsilon$.
\begin{figure}[h]
\begin{subfigure}{.49\textwidth}
  \centering
\includegraphics[width=.9\textwidth]{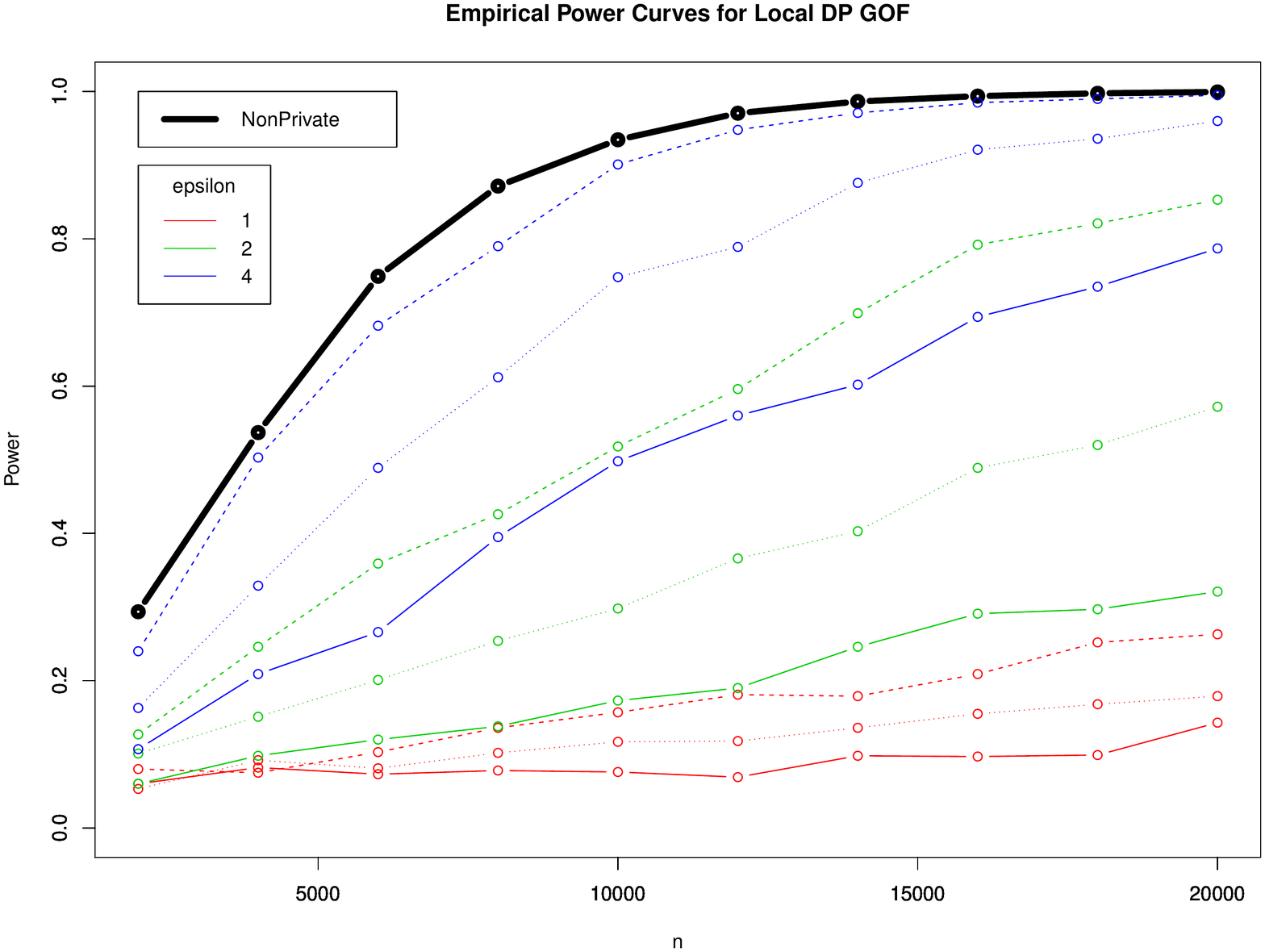}
\end{subfigure}
\hfill
\begin{subfigure}{.49\textwidth}
  \centering
\includegraphics[width=.9\textwidth]{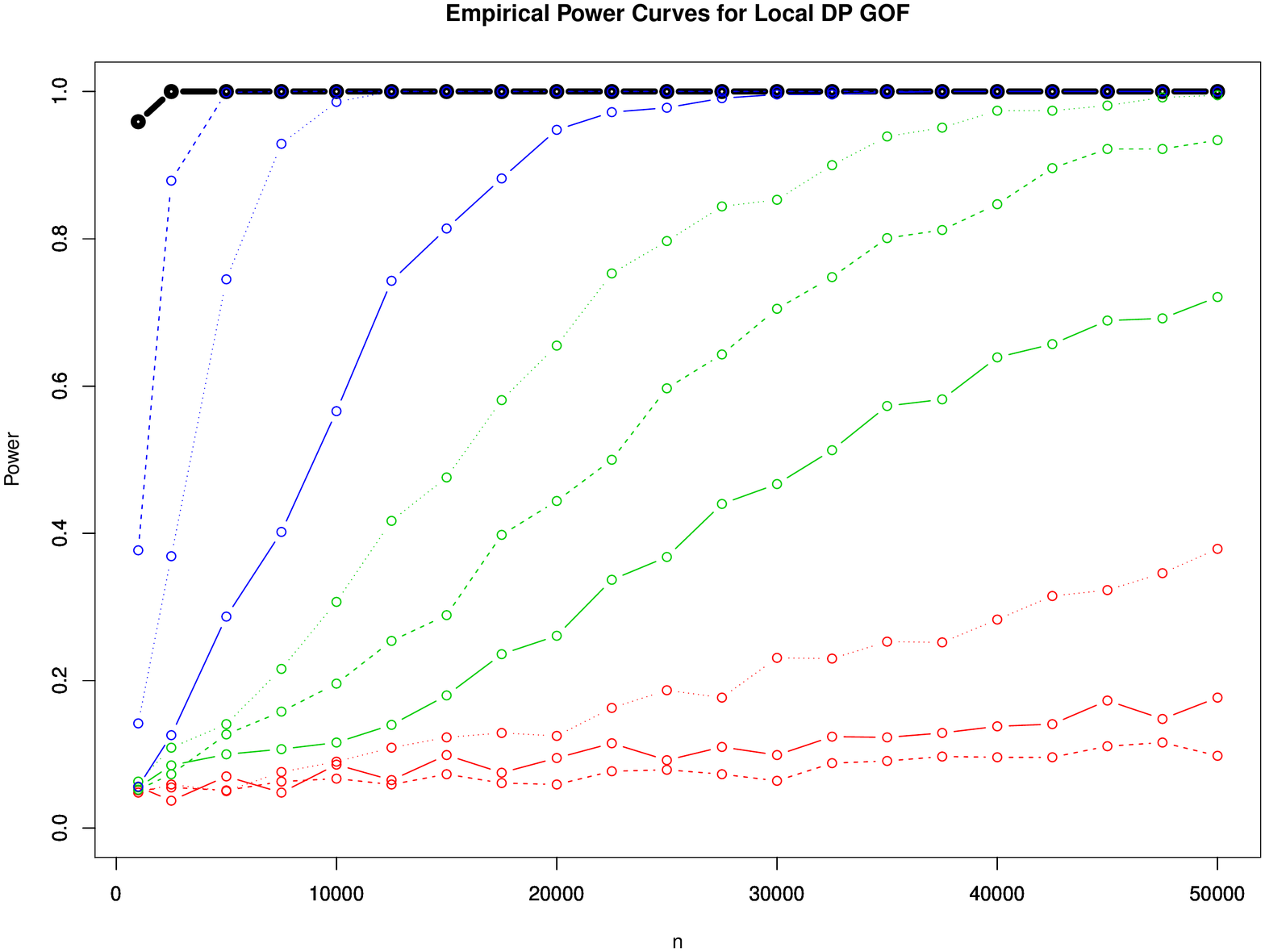}
\end{subfigure}
\caption{Comparison of empirical power among the classical non-private test and the local private tests $\GaussGOF$ with Laplace noise (solid line), $\ExpGOF$ (dashed line), and $\BitGOF$ (dotted line);  in the left plot, $d = 4$ and $\eta = 0.01$, in the right plot, $d = 40$ and $\eta = 0.005$.
\label{fig:LocalDPGOFPower}}
\end{figure}

From \Cref{fig:LocalDPGOFPower}, we can see that the test statistics that have the largest noncentral parameter for a particular dimension $d$ and privacy parameter $\epsilon$ will have the best empirical power.  When $d = 4$, we see that $\ExpGOF$ performs the best.  However, for $d = 40$ it is not so clear cut.  When $\epsilon = 4$, we can see that $\ExpGOF$ does the best, but then when $\epsilon = 2$, $\BitGOF$ does best.  Thus, the best Local DP Goodness of Fit test depends on the noncentral parameter, which is a function of $\epsilon$, the null hypothesis $\nullbp$, and alternate $\bp = \nullbp + \bDelta$.  Note that the worst local DP test also depends on the privacy parameter and the dimension $d$.  
Based on our empirical results, we see that no single locally private
test is best for all data dimensions.  Knowing the corresponding
noncentral parameter for a given problem is useful in determining
which tests to use, where the larger the noncentral parameter is the
higher the power will be.  
\section{Local Private Independence Tests}

We now show that our techniques can be extended to include \emph{composite} hypothesis tests, where we test whether the data comes from a whole family of probability distributions.  We will focus on independence testing, but much of the theory can be extended to general chi-square tests.  We will closely follow the presentation and notation as in \citep{KR17}.  

We consider two multinomial random variables $\{ \bU_\ell \}^n_{\ell = 1} \stackrel{i.i.d.}{ \sim} \mult(1,\bpipi{1})$ for $\bpipi{1} \in \R^{r}$, $\{\bV_\ell \}_{\ell = 1}^n \stackrel{i.i.d.}{ \sim} \mult(1,\bpipi{2})$ for $\bpipi{2} \in \R^{c}$ and no component of $\bpipi{1}$ or $\bpipi{2}$ is zero. Without loss of generality, we will consider an individual to be in one of $r$ groups who reports a data record that is in one of $c$ categories.  The collected data consists of $n$ joint outcomes $\bH$ whose $(i,j)$th coordinate is $H_{i,j} =  \sum_{\ell = 1}^n \1\{U_{\ell,i} = 1 \quad \& \quad V_{\ell,j} = 1\} $.  Note that $\bH$ is then the contingency table over the joint outcomes.  

Under the null hypothesis of independence between $\{ \bU_\ell \}^n_{\ell = 1} $ and $\{ \bV_\ell \}^n_{\ell = 1}$, we have 
\begin{equation}
\bH \sim \mult\left(n,\bp(\bpipi{1},\bpipi{2})\right)\qquad  \text{ where } \bp(\bpipi{1}, \bpipi{2}) = \bpipi{1} \left(\bpipi{2}\right)^\intercal 
\label{eq:privhist}
\end{equation}
What makes this test difficult is that the analyst does not know the data distribution $\bp(\bpipi{1},\bpipi{2})$ and so cannot simply plug it into the chi-square statistic.  Rather, we use the data to estimate the \emph{best guess} for the unknown probability distribution that satisfies the null hypothesis.  

Note that without privacy, each individual $\ell \in [n]$ is reporting a $r \times c$ matrix $\bX_\ell$ which would be 1 in exactly one location.  Thus we can alternatively write the contingency table as $\bH = \sum_{\ell = 1}^n \bX_\ell$.  We then use the three local private algorithms we presented earlier for noise addition, exponential mechanism, and bit flipping to see how we can form a private chi-square statistic for independence testing.  We want to be able to ensure the privacy of both the group and the category that  each individual belongs to.  

For completeness, we will present the minimum chi-square asymptotic theory from \cite{Ferg96} that was used in \cite{KR17}.  We will use this theory to design general chi-square test statistics for local differentially private hypothesis tests.  Consider a $d$ dimensional random vector $\bV^{(n)}$ and a $k \leq d$ dimensional parameter space $\Theta$, which is an open subset of $\R^k$.  We want to measure the distance between the random variable $\bV^{(n)}$ and some model $A(\btheta)$ that maps parameters in $\Theta$ to $\R^d$.  We then use the following quadratic form where $M(\btheta) \in \R^{d\times d}$ is a positive-semidefinite matrix.  
\begin{equation}
n \left(\bV^{(n)} - A(\btheta) \right)^\intercal M(\btheta) \left(\bV^{(n)} - A(\btheta) \right)
\label{eq:quadratic}
\end{equation}
We want to ensure the following assumptions hold.

\begin{assumpt}
For all $\btheta \in \Theta$, we have
\begin{itemize}
\item There exists a parameter $\btheta^\star \in \Theta$ such that for some model $A(\cdot)$ and covariance matrix $C(\cdot)$
$$
\sqrt{n} \left( \bV^{(n)} - A(\btheta^\star) \right) \stackrel{D}{\to} \Normal{\pmb{0}}{C(\btheta^\star)}
$$
\item $A(\btheta)$ is bicontinuous.
\item $A(\btheta)$ has continuous first partial derivatives, denoted as $\dot{A}(\btheta)$, with full rank $k$.
\item The matrix $M(\btheta)$ is continuous in $\btheta$ and there exists an $\eta >0$ such that $M(\btheta) - \eta I_d$ is positive definite in an open neighborhood of $\btheta^\star$.  
\end{itemize}
\label{assumpt:one}
\end{assumpt}
We will then use the following useful result from \cite{KR17}[Theorem 4.2].
\begin{theorem}[\citet{KR17}]
Let $\nu$ be the rank of $C(\btheta^\star)$.  If \Cref{assumpt:one} holds, and for all $\btheta \in \Theta$ we have 
$$
C(\btheta) M(\btheta) C(\btheta) = C(\btheta)
$$
$$
C(\btheta) M(\btheta) \dot{A}(\btheta) = \dot{A}(\btheta) 
$$
Then for any estimate $\phi(\bV^{(n)}) \stackrel{P}{\to} \btheta^\star$ we have 
$$
\min_{\btheta \in \Theta} \left\{ n \left(\bV^{(n)} - A(\btheta) \right)^\intercal M(\phi(\bV^{(n)})) \left(\bV^{(n)} - A(\btheta) \right)\right\} \stackrel{D}{\to} \chi^2_{\nu-k}.
$$
\label{thm:KRGeneral}
\end{theorem}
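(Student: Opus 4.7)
The plan is to follow the standard minimum chi-square argument: establish consistency of the minimizer, linearize $A$ around $\btheta^\star$ so that the random quadratic form reduces to a quadratic in the asymptotically Gaussian vector $\bW_n := \sqrt{n}(\bV^{(n)} - A(\btheta^\star))$, and then use the two algebraic identities $C M C = C$ and $C M \dot{A} = \dot{A}$ to identify the resulting limit as $\chi^2_{\nu-k}$ after a clean whitening. Let $\btheta^{(n)}$ denote any minimizer of $Q_n(\btheta) := n(\bV^{(n)}-A(\btheta))^\intercal M(\phi(\bV^{(n)}))(\bV^{(n)}-A(\btheta))$. Consistency $\btheta^{(n)} \stackrel{P}{\to} \btheta^\star$ follows from \Cref{assumpt:one}: $M$ is uniformly positive definite near $\btheta^\star$, $A$ is bicontinuous, and $\bV^{(n)} \stackrel{P}{\to} A(\btheta^\star)$ by the stated CLT.

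The first-order optimality condition gives $\dot{A}(\btheta^{(n)})^\intercal M(\phi(\bV^{(n)}))(\bV^{(n)} - A(\btheta^{(n)})) = 0$. Taylor-expanding $A(\btheta^{(n)}) = A(\btheta^\star) + \dot{A}(\btheta^\star)(\btheta^{(n)}-\btheta^\star) + o_P(n^{-1/2})$, substituting, and using continuity of $\dot{A}$ and $M$ with $\phi(\bV^{(n)})\stackrel{P}{\to}\btheta^\star$, I solve the resulting linear system to obtain (with all matrices evaluated at $\btheta^\star$)
$$
\sqrt{n}(\bV^{(n)} - A(\btheta^{(n)})) = (I - P)\bW_n + o_P(1), \qquad P := \dot{A}(\dot{A}^\intercal M\dot{A})^{-1}\dot{A}^\intercal M.
$$
Plugging back, the minimum of $Q_n$ equals $\bW_n^\intercal (I-P)^\intercal M (I-P)\bW_n + o_P(1)$, which by the continuous mapping theorem converges in distribution to $\bW^\intercal(I-P)^\intercal M(I-P)\bW$ with $\bW \sim \Normal{\pmb{0}}{C}$.

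The main obstacle, and the step that genuinely uses both algebraic conditions, is identifying this limit as $\chi^2_{\nu-k}$. My plan is to whiten by setting $\bT := M^{1/2}\bW \sim \Normal{\pmb{0}}{\Sigma_T}$ with $\Sigma_T := M^{1/2} C M^{1/2}$. The identity $CMC = C$ yields $\Sigma_T^2 = \Sigma_T$, so $\Sigma_T$ is an orthogonal projection of rank $\nu$. Conjugation gives $M^{1/2}(I-P)M^{-1/2} = I - Q$, where $Q := M^{1/2}\dot{A}(\dot{A}^\intercal M \dot{A})^{-1}\dot{A}^\intercal M^{1/2}$ is the orthogonal projection onto $\mathrm{range}(M^{1/2}\dot{A})$, of rank $k$; a short computation then shows $\bW^\intercal(I-P)^\intercal M(I-P)\bW = \bT^\intercal(I-Q)\bT$. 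Rewriting $CM\dot{A}=\dot{A}$ as $M^{1/2}\dot{A} = \Sigma_T M^{1/2}\dot{A}$ shows $\mathrm{range}(Q) \subseteq \mathrm{range}(\Sigma_T)$, so $\bT$ is standard Gaussian on the $\nu$-dimensional subspace $\mathrm{range}(\Sigma_T)$ and $I-Q$ restricts there to an orthogonal projection of rank $\nu-k$; \Cref{thm:classical} delivers $\chi^2_{\nu-k}$. The technical care required is in ensuring invertibility of $M^{1/2}$ and of $\dot{A}^\intercal M\dot{A}$, both of which follow from positive definiteness of $M$ near $\btheta^\star$ and the full-rank assumption on $\dot{A}$.
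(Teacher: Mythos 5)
The paper does not actually prove this statement: it is imported verbatim from \citet{KR17} (their Theorem 4.2), which in turn is an adaptation of Ferguson's classical minimum chi-square theory, so there is no in-paper proof to compare against. Your argument is essentially that classical proof, and the part that matters most — the whitening step showing that $CMC=C$ makes $\Sigma_T=M^{1/2}CM^{1/2}$ a rank-$\nu$ orthogonal projection, that $CM\dot{A}=\dot{A}$ forces $\mathrm{range}(Q)\subseteq\mathrm{range}(\Sigma_T)$, and hence that $\Sigma_T-Q$ is a rank-$(\nu-k)$ projection to which \Cref{thm:classical} applies — is correct as written. The only soft spot is the linearization: writing $A(\btheta^{(n)})=A(\btheta^\star)+\dot{A}(\btheta^\star)(\btheta^{(n)}-\btheta^\star)+o_P(n^{-1/2})$ presupposes $\btheta^{(n)}-\btheta^\star=O_P(n^{-1/2})$, which must first be bootstrapped from the first-order conditions after consistency is established (and consistency itself needs a word about the minimizer not escaping the open set $\Theta$, since $M(\btheta)-\eta I_d$ is only assumed positive definite near $\btheta^\star$); these are standard steps in Ferguson's treatment but should be stated rather than absorbed into the remainder term.
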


To ensure the assumptions hold, we will assume that each component of $\bpipi{1}, \bpipi{2}$ is positive, i.e. $\bpipi{i} > 0$ for $i = 1,2$.

\subsection{Independence Test with Noise Addition}
For noise addition, we will have the following contingency table $\bH + \bZ$ (treating $\bH$ as an $r \times c$ vector) where $\bZ = \Normal{\pmb{0}}{\frac{n}{\rho} }$ for $\rho$-LzCDP or $\bZ$ is the sum of $n$ independent Laplace random vectors with scale parameter $2/\epsilon$ in each coordinate for $\epsilon$-LDP, although we will only consider the Gaussian noise case.  We can then use the same statistic presented in \citet{KR17} with a rescaling of the variance in the noise and flattening the contingency table as a vector.  For convenience, we write the marginals as $H_{i,\cdot} = \sum_{j = 1}^c H_{i,j}$ and similarly for $H_{\cdot, j}$.  Further, we will write $\hat{n} = n + \sum_{i,j} Z_{i,j}$ in the following test statistic,
\footnote{We will follow a common rule of thumb for small sample sizes, so that if $n \hbpipi{1} \left(\hbpipi{2}\right)^\intercal \leq 5$ for any cell, then we simply \emph{fail to reject} $\nullhyp$ }
\begin{align}
\hbpipi{1} & = \left( \frac{1}{\hat{n}}\left(H_{i,\cdot} + Z_{i,\cdot} \right) : i \in [r]\right), \qquad \hbpipi{2} =  \left( \frac{1}{\hat{n}} \left(H_{\cdot,j} + Z_{\cdot,j} \right) : j \in [c]\right) \nonumber \\
\hat{M}& =  \diag\left( \bp(\hbpipi{1}, \hbpipi{2} )\right) -  \bp(\hbpipi{1}, \hbpipi{2} ) \left( \bp(\hbpipi{1}, \hbpipi{2} )\right)^\intercal + \frac{1}{\rho} I_{r c}\nonumber \\
\bchisqKR{\rho/n; (\bpipi{1},\bpipi{2})} & = \frac{1}{n} \left( \bH + \bZ -  n \bp\left(\bpipi{1}, \bpipi{2}\right) \right)^\transpose \Pi \left(\hat{M} \right)^{-1} \Pi \left(  \bH + \bZ - n \bp\left(\bpipi{1}, \bpipi{2}\right)  \right)
\label{eq:KRind}
\end{align}

We use this statistic because it is asymptotically distributed as a chi-square random variable given the null hypothesis holds, which follows directly from the general chi-square theory presented in \cite{KR17}.
\begin{theorem}
Under the null hypothesis that $\bU$ and $\bV$ are independent, then we have as $n \to \infty$
$$
\min_{\bpipi{1},\bpipi{2}} \left\{ \bchisqKR{\rho/n; (\bpipi{1},\bpipi{2})} \right\} \stackrel{D}{\to} \chi^2_{(r-1)(c-1)}.
$$ 
\end{theorem}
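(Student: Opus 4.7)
The plan is to cast $\min_{\bpipi{1},\bpipi{2}}\bchisqKR{\rho/n;(\bpipi{1},\bpipi{2})}$ into the minimum chi-square framework of \Cref{thm:KRGeneral}. I flatten the $r\times c$ contingency table into a vector in $\R^{rc}$, set $\bV^{(n)}=(\bH+\bZ)/n$, take the parameter $\btheta=(\bpipi{1},\bpipi{2})\in\Theta$ with $\Theta$ the product of the two open probability simplices (so $k=(r-1)+(c-1)$), and let $A(\btheta)=\bp(\bpipi{1},\bpipi{2})=\bpipi{1}(\bpipi{2})^\intercal$ flattened. The covariance and middle matrices are $C(\btheta)=\Pi\bigl[\diag(\bp(\btheta))-\bp(\btheta)\bp(\btheta)^\intercal+\tfrac{1}{\rho}I_{rc}\bigr]\Pi$ and $M(\btheta)=\Pi\bigl[\diag(\bp(\btheta))-\bp(\btheta)\bp(\btheta)^\intercal+\tfrac{1}{\rho}I_{rc}\bigr]^{-1}\Pi$, where $\Pi=I_{rc}-\tfrac{1}{rc}\one\one^\intercal$; with plug-in estimator $\phi(\bV^{(n)})=(\hbpipi{1},\hbpipi{2})$ formed from the row and column marginals, the quadratic form $n(\bV^{(n)}-A(\btheta))^\intercal M(\phi(\bV^{(n)}))(\bV^{(n)}-A(\btheta))$ in \Cref{thm:KRGeneral} coincides with $\bchisqKR{\rho/n;(\bpipi{1},\bpipi{2})}$.

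Next I would verify \Cref{assumpt:one}. The CLT is immediate: under $\nullhyp$ the multinomial $\bH$ and the aggregate Gaussian $\bZ$ are independent with covariances $n[\diag(\bp^*)-\bp^*(\bp^*)^\intercal]$ and $(n/\rho)I_{rc}$, so $\sqrt{n}(\bV^{(n)}-A(\btheta^*))\stackrel{D}{\to}\Normal{0}{\diag(\bp^*)-\bp^*(\bp^*)^\intercal+\tfrac{1}{\rho}I_{rc}}$, and applying $\Pi$ recovers $C(\btheta^*)$. Bicontinuity of $A$ follows from the unique rank-one factorization under unit-sum constraints; $A$ is bilinear hence smooth; and $\dot A(\btheta)$ has full column rank $r+c-2$ because tangent perturbations $v_1,v_2$ along the two simplices satisfying $v_1(\bpipi{2})^\intercal+\bpipi{1}v_2^\intercal=0$ must vanish (summing columns against $\one$ forces $v_1=0$, and analogously $v_2=0$). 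Continuity and the positive-definite lower bound on $M$ restricted to $\mathrm{Image}(\Pi)$ follow from $\diag(\bp)-\bp\bp^\intercal+\tfrac{1}{\rho}I\succeq\tfrac{1}{\rho}I$, uniformly in $\btheta$. Consistency of $\phi$ is an LLN: the row marginals of $\bH/n$ converge in probability to $\bpipi{1}$ since $\bH\one_c\sim\mult(n,\bpipi{1})$ under independence, while $\bZ\one_c/n$ vanishes in probability (per-coordinate variance $c/(n\rho)$); column marginals are analogous.

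The main obstacle is verifying the two algebraic identities $CMC=C$ and $CM\dot A=\dot A$. The key structural fact is that $\one$ is an eigenvector of the inner matrix $\diag(\bp(\btheta))-\bp(\btheta)\bp(\btheta)^\intercal+\tfrac{1}{\rho}I_{rc}$ with eigenvalue $\tfrac{1}{\rho}$ (since $\bp^\intercal\one=1$ forces $[\diag(\bp)-\bp\bp^\intercal]\one=0$), so this matrix preserves the orthogonal decomposition $\R^{rc}=\mathrm{span}(\one)\oplus\ker(\one^\intercal)$. Therefore, on $\ker(\one^\intercal)=\mathrm{Image}(\Pi)$, $M(\btheta)$ coincides with the honest inverse of $C(\btheta)$, making $M$ the Moore-Penrose pseudoinverse of $C$, and $CMC=C$ is automatic. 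For the second identity, each column of $\dot A(\btheta)$ is a tangent perturbation of $\bp(\btheta)\in\Delta^\circ_{rc-1}$ (as computed in the rank check above, summing $v_1(\bpipi{2})^\intercal+\bpipi{1}v_2^\intercal$ against $\one\otimes\one$ yields $0$), so $\mathrm{Image}(\dot A(\btheta))\subseteq\ker(\one^\intercal)$, giving $\Pi\dot A=\dot A$ and hence $CM\dot A=\dot A$ by the pseudoinverse relation.

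Combining these, \Cref{thm:KRGeneral} yields the asymptotic law $\chi^2_{\nu-k}$. The rank $\nu$ of $C(\btheta^*)$ equals $rc-1$: the pre-projection matrix is full rank $rc$ thanks to the $\tfrac{1}{\rho}I_{rc}$ term, but the $\Pi$-sandwich drops the one-dimensional $\mathrm{span}(\one)$ eigenspace. With $k=r+c-2$, this gives $\chi^2_{(rc-1)-(r+c-2)}=\chi^2_{(r-1)(c-1)}$, as claimed.
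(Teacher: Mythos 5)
Your proposal is correct and follows exactly the route the paper intends: the paper gives no proof of this theorem beyond the remark that it ``follows directly from the general chi-square theory'' of \cite{KR17} (i.e.\ \Cref{thm:KRGeneral}), and your argument is the natural filling-in of that citation — identifying $A$, $C$, $M$, verifying $CMC=C$ and $CM\dot A=\dot A$ via the fact that $\one$ is an eigenvector of the inner matrix so that $\Pi\Sigma^{-1}\Pi$ is the pseudoinverse of $\Pi\Sigma\Pi$, and counting $\nu-k=(rc-1)-(r+c-2)=(r-1)(c-1)$. The only cosmetic wrinkle is that \Cref{assumpt:one} as stated asks for $M(\btheta)-\eta I$ positive definite, which a singular $M=\Pi\Sigma^{-1}\Pi$ cannot satisfy literally; your restriction to $\mathrm{Image}(\Pi)$ is the standard and correct reading, and the paper's own applications of the theorem take the same liberty.
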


We present the test in \Cref{alg:LocalGaussIND} for Gaussian noise which uses the statistic in \eqref{eq:KRind}.

\begin{algorithm}
\caption{LzCDP Independence Test with Gaussian Mechanism: $\GaussIND$}
\label{alg:LocalGaussIND}
\begin{algorithmic}
\REQUIRE $(\bbx_1,\cdots, \bbx_n)$, $\rho$, $\alpha$.
\STATE Let $\bH = \sum_{\ell = 1}^n \bbx_\ell$ 
\STATE Set $ q = \min_{\bpipi{1},\bpipi{2}} \left\{ \bchisqKR{\rho/n; (\bpipi{1},\bpipi{2})} \right\}$ given in \eqref{eq:KRind}.
\IF{$q > \chi_{(r-1)(c-1),1-\alpha}^2$}
	\STATE Decision $\gets $ Reject.
\ELSE
	\STATE Decision $\gets$ Fail to Reject.
\ENDIF
\ENSURE Decision
\end{algorithmic}
\end{algorithm}

We then have the following result which follows from the privacy analysis from before.
\begin{theorem}
$\GaussIND$ is $\rho$-LzCDP.  
\end{theorem}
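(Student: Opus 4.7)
The plan mirrors the privacy proofs already given for $\GaussGOF$, $\ExpGOF$, and $\BitGOF$ earlier in the paper: exhibit a per-individual local randomizer, verify it satisfies the claimed zCDP bound, and then observe that everything else $\GaussIND$ does is post-processing of those randomized outputs.

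First, I would make explicit what the algorithm statement leaves implicit, namely that the noise vector $\bZ$ arises as $\bZ = \sum_{\ell=1}^n \bZ_\ell$ where each individual $\ell$ locally applies the randomizer $R(\bbx_\ell) = \bbx_\ell + \bZ_\ell$ with independent Gaussian noise $\bZ_\ell \sim \Normal{\pmb{0}}{\frac{1}{\rho} I_{rc}}$, after flattening the $r \times c$ indicator matrix $\bbx_\ell$ to a vector in $\R^{rc}$. Next, I would verify that $R$ is $\rho$-zCDP by appealing to the Gaussian-mechanism bound of \cite{BS16}: any two admissible single-individual inputs lie in $\{0,1\}^{rc}$ with exactly one nonzero coordinate, so the $\ell_2$-sensitivity of the identity is at most $\sqrt{2}$, and Gaussian noise with variance $\sigma^2 = 1/\rho$ per coordinate gives zCDP parameter $\sqrt{2}^2 / (2\sigma^2) = \rho$. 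This is exactly the same calibration already used in $\GaussGOF$, just applied to $rc$-dimensional indicator vectors instead of $d$-dimensional ones.

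Finally, I would observe that every quantity the algorithm subsequently touches—the noisy aggregate contingency table $\bH + \bZ = \sum_\ell R(\bbx_\ell)$, the estimated marginals $\hbpipi{1}$ and $\hbpipi{2}$, the middle matrix $\hat M$, the minimization defining $\bchisqKR{\rho/n; (\bpipi{1},\bpipi{2})}$, and the comparison to the critical value—depends on the raw data $\bbx$ only through the $n$ locally privatized outputs. By the post-processing property of zCDP together with the definition of LzCDP (each individual's data is accessed only through an $\rho$-zCDP local randomizer), the overall algorithm is $\rho$-LzCDP.

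The only step that requires any care is the sensitivity bookkeeping in the second paragraph: one must confirm that variance $1/\rho$ in each coordinate is tight enough to yield $\rho$-zCDP rather than a constant multiple thereof. This can be handled either by the $\Delta_2 = \sqrt{2}$ calculation above, or, matching the earlier convention in the paper, by recentering the indicator vectors before adding noise to reduce the $\ell_2$-sensitivity to $1$. Every other step is essentially a one-line appeal to standard properties of (local) zCDP, so I do not expect any genuine obstacle.
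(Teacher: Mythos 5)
Your proposal is correct and follows the same route as the paper, which disposes of this theorem by noting that each individual's data is perturbed by an appropriately calibrated local Gaussian mechanism and that everything downstream (the aggregate table, the estimated marginals, the minimization, the threshold comparison) is post-processing of the privatized outputs. Your explicit sensitivity bookkeeping ($\Delta_2 = \sqrt{2}$ for one-hot inputs, per-coordinate variance $1/\rho$, hence zCDP parameter $\Delta_2^2/(2\sigma^2) = \rho$) just spells out a calibration the paper leaves implicit; the aside about recentering is unnecessary (and would not actually change the $\ell_2$ distance between two inputs), but nothing rests on it.
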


\subsection{Independence Test with the Exponential Mechanism}
Next we want to design an independence test when the data is generated from $\multRR$ given in \Cref{alg:multRR}.  In this case our contingency table can be written as $\check{\bH} \sim \mult\left(n, \check{\bp}(\bpipi{1},\bpipi{2}) \right)$ where $\beta_\epsilon = \frac{1}{e^\epsilon + rc-1}$ and we use  \eqref{eq:exphist} to get 
\begin{equation}
\check{\bp}(\bpipi{1},\bpipi{2}) = \beta_\epsilon \left(\left(e^\epsilon -1 \right) \bpipi{1}\left(\bpipi{2}\right)^\intercal \   +\pmb{1} \right)
\label{eq:probexpind}
\end{equation}
We then obtain an estimate for the unknown parameters,

\begin{align}
\cbpipi{1} & =\frac{1}{\beta_\epsilon\left(e^\epsilon - 1\right)} \left( \frac{\check{H}_{i,\cdot}}{n}   - c \beta_\epsilon : i \in [r]\right), \nonumber\\ 
\cbpipi{2} & = \frac{1}{\beta_\epsilon\left(e^\epsilon - 1\right)} \left( \frac{\check{H}_{\cdot,j}}{n}  - r \beta_\epsilon : j \in [c]\right) \nonumber \\
\bchisqExp{\epsilon} & = \sum_{i,j} \frac{\left(\check{H}_{i,j}- n \check{p}_{i,j}\left( \cbpipi{1},\cbpipi{2} \right) \right)^2}{n \check{p}_{i,j}(\cbpipi{1},\cbpipi{2}) } 
\label{eq:Expind}
\end{align}

We can then prove the following result, which uses \Cref{thm:KRGeneral}.  
\begin{theorem}
Assuming $\bU$ and $\bV$ are independent with true probability vectors $\bpipi{1}, \bpipi{2} > 0$ respectively, then as $n \to \infty$ we have
$
\bchisqExp{\epsilon} \stackrel{D}{\to} \chi^2_{(r-1)(c-1)}
$. 
\label{thm:ExpInd}
\end{theorem}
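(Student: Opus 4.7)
The plan is to apply \Cref{thm:KRGeneral} with $\bV^{(n)} = \check{\bH}/n$ flattened as an $rc$-vector, parameter $\btheta = (\bpipi{1}, \bpipi{2})$ ranging over the product of open simplex interiors (of total free dimension $k = r+c-2$), model $A(\btheta) = \check{\bp}(\bpipi{1},\bpipi{2})$ from \eqref{eq:probexpind}, asymptotic covariance $C(\btheta) = \diag(\check{\bp}(\btheta)) - \check{\bp}(\btheta)\check{\bp}(\btheta)^\intercal$ of rank $\nu = rc - 1$, and middle matrix $M(\btheta) = \diag(\check{\bp}(\btheta))^{-1}$. This choice yields the target degrees of freedom $\nu - k = (r-1)(c-1)$.

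First I would verify \Cref{assumpt:one}. By \Cref{lem:RRhist}, $\check{\bH} \sim \mult(n, \check{\bp}(\btheta^\star))$, so the multinomial CLT gives $\sqrt{n}(\bV^{(n)} - A(\btheta^\star)) \stackrel{D}{\to} \Normal{\pmb{0}}{C(\btheta^\star)}$. The map $A$ is bicontinuous on its domain because its inverse is given explicitly by marginals: $\check{p}_{i,\cdot}(\btheta) = \beta_\epsilon((e^\epsilon-1)\pi^{(1)}_i + c)$ and symmetrically for $\check{p}_{\cdot,j}(\btheta)$, which are affine and invertible once $\epsilon>0$. The Jacobian $\dot{A}(\btheta)$ has full column rank $k$ since each partial derivative with respect to a free $\pi^{(1)}_i$ or $\pi^{(2)}_j$ carries the nonzero factor $\beta_\epsilon(e^\epsilon-1)$. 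Finally $M(\btheta)$ is continuous and positive definite in a neighborhood of $\btheta^\star$ since every entry of $\check{\bp}(\btheta)$ is bounded below by $\beta_\epsilon > 0$.

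Next I would verify the two orthogonality identities. Using the identity $\check{\bp}^\intercal\diag(\check{\bp})^{-1} = \one^\intercal$, we obtain $CM = I - \check{\bp}\one^\intercal$, and since $\one^\intercal C = \check{\bp}^\intercal - \check{\bp}^\intercal = 0$, it follows that $CMC = C$. Moreover, the fact that $\one^\intercal A(\btheta) = 1$ for every $\btheta$ forces $\one^\intercal \dot{A}(\btheta) = 0$, hence $CM\dot{A}(\btheta) = (I - \check{\bp}\one^\intercal)\dot{A}(\btheta) = \dot{A}(\btheta)$. Consistency of the estimator $(\cbpipi{1},\cbpipi{2}) \stackrel{P}{\to} (\bpipi{1},\bpipi{2})$ then follows from the LLN applied to the marginals of $\bV^{(n)}$ and the continuous mapping theorem applied to the explicit inversion formulas in \eqref{eq:Expind}.

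The main obstacle is relating the Pearson-style statistic $\bchisqExp{\epsilon}$ in \eqref{eq:Expind} to the minimum quadratic form appearing in \Cref{thm:KRGeneral}, since the stated statistic plugs $(\cbpipi{1},\cbpipi{2})$ into both the residual and the weight. The crucial observation is that $(\cbpipi{1},\cbpipi{2})$ is constructed precisely so that the marginals of $A(\cbpipi{1},\cbpipi{2})$ equal those of $\bV^{(n)}$; thus the residual $\bV^{(n)} - A(\cbpipi{1},\cbpipi{2})$ already lies in the $(r-1)(c-1)$-dimensional subspace of zero-marginal tensors. A Taylor expansion of the statistic around $\btheta^\star$, combined with the two orthogonality identities verified above, shows that the variation of the weight is of lower order and that $\bchisqExp{\epsilon}$ is asymptotically equivalent to the minimum chi-square in \Cref{thm:KRGeneral}. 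Invoking that theorem then delivers $\bchisqExp{\epsilon} \stackrel{D}{\to} \chi^2_{rc - 1 - (r+c-2)} = \chi^2_{(r-1)(c-1)}$.
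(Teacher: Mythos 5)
Your proposal follows the same architecture as the paper's proof: both invoke \Cref{thm:KRGeneral} with the flattened multinomial $\check{\bH}/n$, the model $A(\btheta)=\check{\bp}(\btheta^{(1)},\btheta^{(2)})$ from \eqref{eq:probexpind}, the multinomial covariance $C(\btheta)$ of rank $rc-1$, the weight $M(\btheta)=\diag(\check{\bp}(\btheta))^{-1}$, and $k=r+c-2$ free parameters, and both reduce the claim to the identities $CMC=C$ and $CM\dot A=\dot A$, verified by the same computation ($\check{\bp}^\intercal\diag(\check{\bp})^{-1}=\one^\intercal$ together with $\one^\intercal \dot A=0$). Where you genuinely diverge is the step connecting the plug-in Pearson statistic $\bchisqExp{\epsilon}$ to the \emph{minimized} quadratic form that \Cref{thm:KRGeneral} actually controls. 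The paper asserts that $\left(\cbpipi{1},\cbpipi{2}\right)$ is the exact minimizer; as a finite-$n$ identity this is dubious, since the first-order conditions require the \emph{weighted} residual sums $\sum_j\bigl(\check{H}_{ij}/n-\check{p}_{ij}\bigr)\check{\pi}^{(2)}_j/\check{p}_{ij}$ to be constant in $i$, whereas marginal matching only forces the unweighted row and column sums of the residual to vanish. You instead correctly identify this as the main obstacle and argue asymptotic equivalence via a Taylor expansion, using the (true, and worth stating explicitly) fact that the residual at $\left(\cbpipi{1},\cbpipi{2}\right)$ has zero marginals. That is the more defensible route and the one classical minimum chi-square theory supports, but as written it is still only a sketch: the substantive claim is that the marginal-matching estimator is asymptotically equivalent (to order $n^{-1/2}$) to the minimum chi-square estimator for the privatized model, and this needs to be checked rather than asserted, because once $\epsilon<\infty$ the weight $\diag(\check{\bp})^{-1}$ no longer factorizes over rows and columns, so the cancellation that makes the marginal estimator efficient in the non-private case is not automatic. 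In short: same method and same key lemma as the paper, with a more honest but still incomplete treatment of the one delicate step that the paper glosses over.
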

\begin{proof}
Note that $\cbpipi{1}, \cbpipi{2}$ both converge in probability to the true parameters $\bpipi{1}, \bpipi{2}$, respectively due to $\ex{\check{\bH}/n}  = \check{\bp}(\bpipi{1},\bpipi{2})$ and $\var{\check{\bH}/n} \to 0$.  We then form the following statistic in terms of parameters $\btheta^{(1)}$ in the $r$ dimensional simplex with all positive components and $\btheta^{(2)}$ in the $c$ dimensional simplex with all positive components,
\begin{align*}
& \bchisqExp{\btheta^{(1)}, \btheta^{(2)} ; \epsilon}\\
& \quad = n \left( \check{\bH}/n - \check{\bp}(\btheta^{(1)}, \btheta^{(2)}) \right)^ \intercal \diag\left(\check{\bp}(\cbpipi{1}, \cbpipi{2}) \right)^{-1} \left( \check{\bH}/n - \check{\bp}(\btheta^{(1)}, \btheta^{(2)})  \right)
\end{align*}
We then have the following calculation,
$$
\left( \cbpipi{1}, \cbpipi{2} \right) = \myargmin_{\btheta^{(1)}, \btheta^{(2)}}\left\{\bchisqExp{\btheta^{(1)}, \btheta^{(2)} ; \epsilon} \right\}.
$$
Note that $\sqrt{n} \left( \check{\bH}/n - \check{\bp}(\bpipi{1},\bpipi{2}) \right) \stackrel{D}{\to} \Normal{\pmb{0}}{C(\btheta)}$
where $C(\btheta) =  \diag( \check{\bp}(\btheta^{(1)}, \btheta^{(2)})) -  \check{\bp}(\btheta^{(1)}, \btheta^{(2)}) \left( \check{\bp}(\btheta^{(1)}, \btheta^{(2)}) \right)^\intercal$ is the covariance matrix for the multinomial random variable.

We then apply \Cref{thm:KRGeneral} to prove the statement, so we verify that the following equalities hold,
\begin{align*}
C(\btheta) \cdot \diag\left( \check{\bp}(\btheta^{(1)}, \btheta^{(2)}) \right)^{-1}  \cdot C(\btheta) =  C(\btheta)
\end{align*}
and 
\begin{align*}
C(\btheta)&  \cdot \diag\left( \check{\bp}(\btheta^{(1)}, \btheta^{(2)}) \right)^{-1} \nabla\left( \btheta^{(1)} \left(\btheta^{(2)}\right)^\intercal \right) \\
& = \left( I_{rc} -  \check{\bp}(\btheta^{(1)}, \btheta^{(2)}) \pmb{1}^\intercal  \right) \nabla\left( \btheta^{(1)} \left(\btheta^{(2)}\right)^\intercal \right)\\
& =  \nabla\left( \btheta^{(1)} \left(\btheta^{(2)}\right)^\intercal \right)
\end{align*}
where the last equality follows from the last coordinates of each probability vector can be written as $\theta^{(1)}_r = 1- \sum_{i=1}^{r-1}\theta_i$ and $\theta^{(2)}_c = 1 - \sum_{j=1}^{c-1}\theta_j^{(2)}$.
\end{proof}

We then use this result to design our private chi-square test for independence.
\begin{algorithm}
\caption{Local DP IND Test: $\ExpIND$}
\label{alg:LocalExpIND}
\begin{algorithmic}
\REQUIRE $\bbx = (\bbx_1,\cdots, \bbx_n)$, $\epsilon$, $\alpha$, $\nullhyp: \bp = \nullbp$.
\STATE Let $\check{\bH} = \sum_{i=1}^n \multRR(\bbx_i,\epsilon)$.  
\STATE Set $ q =\bchisqExp{\epsilon}$ from \eqref{eq:Expind}
\STATE {\bf if } $q > \chi_{d-1,1-\alpha}^2,$
	 Decision $\gets $ Reject.
\STATE {\bf else }
	 Decision $\gets$ Fail to Reject.
\ENSURE Decision
\end{algorithmic}
\end{algorithm}

We then have the following result which again follows from the privacy analysis from before.
\begin{theorem}
$\ExpIND$ is $\epsilon$-LDP.  
\end{theorem}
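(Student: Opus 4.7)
The plan is to mirror the privacy argument already used for $\ExpGOF$, since $\ExpIND$ accesses the raw data only through the same local randomizer $\multRR$. First I would observe that in $\ExpIND$ each record $\bbx_i$ is queried exactly once, through an invocation $\multRR(\bbx_i, \epsilon)$, and that $\multRR$ with parameter $\epsilon$ is $\epsilon$-DP as a function of a single record (this is the standard exponential-mechanism guarantee with sensitivity-one indicator utility $q(\bbx,\bbz) = \1\{\bbx = \bbz\}$, and it was recorded earlier in the excerpt). Consequently, in the language of the LR oracle definition, each randomizer $R_j$ applied to index $i$ is $\epsilon$-DP and there is only a single such invocation per index, so the per-user privacy budget sums to $\epsilon$.

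Second, I would note that every subsequent step of $\ExpIND$ — aggregating the privatized records into the contingency table $\check{\bH}$, computing the empirical marginals $\cbpipi{1}, \cbpipi{2}$, evaluating the statistic $\bchisqExp{\epsilon}$ from \eqref{eq:Expind}, comparing it to the chi-square critical value, and emitting the decision — is a deterministic (or at worst data-independent randomized) function of $\check{\bH}$ alone and does not re-touch the raw data. Hence it is post-processing of the outputs of the local randomizers and cannot degrade the LDP guarantee.

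Combining these two observations with the definition of $(\epsilon,0)$-LDP gives the result immediately: $\ExpIND$ accesses $\bbx$ only via LR-oracle calls whose per-index $\epsilon$-budget sums to $\epsilon$, and the rest is post-processing, so $\ExpIND$ is $\epsilon$-LDP. There is no real obstacle here; the only thing to be a little careful about is making explicit that each $\bbx_i$ is used in exactly one randomizer call (so no composition over multiple accesses to the same record is required), which is evident from inspecting the pseudocode of \Cref{alg:LocalExpIND}.
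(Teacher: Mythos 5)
Your proposal is correct and matches the paper's own (very brief) argument: each record is touched exactly once by the $\epsilon$-DP local randomizer $\multRR$, and everything downstream of the privatized contingency table is post-processing, which is precisely the reasoning the paper invokes by reference to the earlier privacy analysis of $\ExpGOF$. Your write-up is in fact more explicit than the paper's one-line justification, but the approach is the same.
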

\subsection{Independence Test with Bit Flipping}
Lastly, we design an independence test when the data is reported via $\Mbit$ in \Cref{alg:localJL}.  Assuming that $\bH = \sum_{\ell=1}^n\bX_\ell \sim \mult\left(n, \bp(\bpipi{1}, \bpipi{2}) \right)$, then we know that replacing $\nullbp$ with $\bp(\bpipi{1},\bpipi{2})$ in \Cref{sect:BitFlip} gives us the following asymptotic distribution (treating the contingency table of values as a vector) with covariance matrix $\covarJL(\cdot)$ given in \eqref{eq:covarBitFlip}
\begin{align}
& \sqrt{n} \left( \frac{\tilde{\bH}}{n}  - \left[ \underbrace{ \left( \frac{e^{\epsilon/2} -1 }{e^{\epsilon/2} +1 } \right) \bpipi{1}\left(\bpipi{2}\right)^\intercal + \frac{1}{e^{\epsilon/2} + 1}}_{\tilde{\bp}(\bpipi{1},\bpipi{2})}\right] \right) \nonumber\\
& \qquad \stackrel{D}{\to} \Normal{ \pmb{0} }{\covarJL\left( \bpipi{1} \left(\bpipi{2} \right)^\transpose \right)}
\label{eq:BF_CLT}
\end{align}
Similar to analysis for \Cref{thm:ExpInd}, we start with a rough estimate for the unknown parameters which converges in probability to the true estimates, so we again use $\alpha_\epsilon =  \left(\frac{e^{\epsilon/2}-1}{e^{\epsilon/2}+1} \right)$ to get
\begin{align}
\tbpipi{1} &= \left( \frac{1}{\alpha_\epsilon} \right) \left( \frac{\tilde{H}_{i, \cdot}}{n} - \frac{c}{e^{\epsilon/2} + 1} : i \in [r]\right) \nonumber \\
\tbpipi{2}& = \left( \frac{1}{\alpha_\epsilon} \right) \left( \frac{\tilde{H}_{\cdot, j}}{n} - \frac{r}{e^{\epsilon/2} + 1} : j \in [c]\right)
\end{align}

We then give the resulting statistic, parameterized by the unknown parameters $\bpipi{\ell}$, for $\ell \in \{1,2\}$.  For middle matrix $\tilde{M}  = \Pi \covarJL\left( \tbpipi{1} \left(\tbpipi{2} \right)^\transpose \right) ^{-1} \Pi$, we have
\begin{align}
&\bchisqBF{\btheta^{(1)},\btheta^{(2)};\epsilon}     = \frac{1}{n} \left( \tilde{\bH} -  n \tilde{\bp}\left(\btheta^{(1)},\btheta^{(2)}\right) \right)^\transpose \nonumber   \\
& \qquad \tilde{M} \left(  \tilde{\bH} - n \tilde{\bp}\left(\btheta^{(1)},\btheta^{(2)}\right)  \right)
\label{eq:BitFlipind}
\end{align}

\begin{theorem}
Under the null hypothesis that $\bU$ and $\bV$ are independent with true probability vectors $\bpipi{1}, \bpipi{2} > 0$ respectively, then we have as $n \to \infty$,
$
\min_{\btheta^{(1)},\btheta^{(2)}} \left\{ \bchisqBF{ \btheta^{(1)},\btheta^{(2)};\epsilon} \right\} \stackrel{D}{\to} \chi^2_{(r-1)(c-1)}.
$
\end{theorem}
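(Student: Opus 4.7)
The plan is to invoke \Cref{thm:KRGeneral} exactly as in the proof of \Cref{thm:ExpInd}, with the identifications $\bV^{(n)} = \tilde{\bH}/n$, model $A(\btheta^{(1)},\btheta^{(2)}) = \tilde{\bp}(\btheta^{(1)},\btheta^{(2)})$ from \eqref{eq:BF_CLT}, covariance $C(\btheta) = \covarJL(\btheta^{(1)}(\btheta^{(2)})^\transpose)$ from \eqref{eq:covarBitFlip}, middle matrix $M(\btheta) = \Pi \covarJL(\btheta^{(1)}(\btheta^{(2)})^\transpose)^{-1} \Pi$, and pilot estimator $\phi(\bV^{(n)}) = (\tbpipi{1},\tbpipi{2})$. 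The parameter space $\Theta$ is the product of two open (positive-component) simplices of dimensions $r-1$ and $c-1$, so $k = r+c-2$.

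First I would dispense with the preliminary hypotheses of \Cref{assumpt:one} and the consistency of $\phi$. The CLT is exactly \eqref{eq:BF_CLT}. The pilot estimates are affine functions of the marginals of $\tilde{\bH}/n$, whose expectation equals $\tilde{\bp}(\bpipi{1},\bpipi{2})$ and whose variance is $O(1/n)$, so the continuous mapping theorem yields $(\tbpipi{1},\tbpipi{2}) \stackrel{P}{\to} (\bpipi{1},\bpipi{2})$. The model $A$ is a smooth bijection onto its image and $\dot{A}$ has full column rank $k$ because $\alpha_\epsilon > 0$; $M$ is continuous in $\btheta$ near the true parameter by the invertibility clause of \Cref{lem:technical}.

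The crux is verifying the two algebraic identities $C M C = C$ and $C M \dot{A} = \dot{A}$. The key observation is that $\pmb{1}$ is an eigenvector of $\covarJL(\bp)$ for every probability vector $\bp$: a direct computation from \eqref{eq:covarBitFlip} using $\diag(\bp)\pmb{1} = \bp$ and $\bp^\transpose \pmb{1} = 1$ gives $\covarJL(\bp)\pmb{1} = \frac{e^{\epsilon/2}}{(e^{\epsilon/2}+1)^2}\pmb{1}$, so $\covarJL^{-1}$ shares this eigenvector. Moreover, the image of $A$ lies in the affine subspace $\{\bbv : \pmb{1}^\transpose \bbv = \alpha_\epsilon + d/(e^{\epsilon/2}+1)\}$, since $\sum_{i,j} \theta_i^{(1)}\theta_j^{(2)} = 1$; hence each column of $\dot{A}(\btheta)$ is orthogonal to $\pmb{1}$, i.e. $\Pi \dot{A} = \dot{A}$. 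On the range of $\Pi$, that is on $\pmb{1}^\perp$, $M$ coincides with the true inverse of $C$, and so both required identities reduce to the trivial ones $C C^{-1} C = C$ and $C C^{-1} \dot{A} = \dot{A}$ there.

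The main obstacle I anticipate is precisely this $\pmb{1}$-direction: taken literally, the positive-definite requirement on $M$ in \Cref{assumpt:one} fails, since $M\pmb{1} = \pmb{0}$. My plan is to handle this the same way the analogous independence tests $\GaussIND$ and $\ExpIND$ implicitly do: pass to the reduced $(d-1)$-dimensional problem on $\pmb{1}^\perp$, where $C$ restricts to an invertible operator, $M$ restricts to its inverse, and where $A(\Theta)$ already lives. In this reduced formulation the effective rank is $\nu = rc - 1$, so \Cref{thm:KRGeneral} gives convergence in distribution to $\chi^2_{\nu - k} = \chi^2_{(r-1)(c-1)}$, as claimed.
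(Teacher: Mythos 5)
Your proposal is correct and follows essentially the same route as the paper: both invoke \Cref{thm:KRGeneral} with the bit-flip model and middle matrix $\Pi\covarJL(\cdot)^{-1}\Pi$, and both hinge on the observation that $\pmb{1}$ is an eigenvector of $\covarJL(\bp)$ (so that $\covarJL$ commutes with $\Pi$ and the two identities $CMC=C$, $CM\dot A=\dot A$ collapse on $\pmb{1}^\perp$). Your explicit reduction to the rank-$(rc-1)$ problem on $\pmb{1}^\perp$, with the attendant bookkeeping $\nu-k=(r-1)(c-1)$, is exactly what the paper's diagonalization argument does implicitly, just stated more carefully.
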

\begin{proof}
As in the proof of \Cref{thm:ExpInd}, we will use \Cref{thm:KRGeneral}, so we first need to the verify the following equality holds, where $\bp(\btheta) = \btheta^{(1)} \left(\btheta^{(2)}\right)^\transpose$
\begin{align*}
\Pi \covarJL\left(\bp(\btheta)  \right) \Pi \cdot \covarJL\left(\bp(\btheta) \right)^{-1} \cdot \Pi \covarJL\left( \bp(\btheta)  \right) \Pi  = \Pi\covarJL\left( \bp(\btheta)  \right) \Pi.
\end{align*}
Recall that $\Sigma(\bp(\btheta))$ has an eigenvector of all 1's and the other eigenvectors are orthogonal to it.  Hence, when we diagonalize $\Sigma(\bp(\btheta)) = BDB^\intercal$, then $\Pi B$ is going to be the same as $B$ except the column of all 1's becomes zero, which we will denote as $B^*$.  Hence, $\Sigma(\bp(\btheta)) \Pi \Sigma(\bp(\btheta))^{-1} \Pi \Sigma(\bp(\btheta))$, becomes $B D^* B^\intercal$ where $D^*$ is the same as $D$ except the eigenvalue on the diagonal for the eigenvector of all 1's becomes zero. Then projecting the resulting matrix with $\Pi$ will not change the resulting product  We then have 
\begin{align*}
& \Pi \covarJL\left(\bp(\btheta)  \right) \Pi \cdot \covarJL\left(\bp(\btheta) \right)^{-1}\cdot  \Pi \covarJL\left( \bp(\btheta)  \right) \Pi \\
& \qquad = \Pi B D^* B^\intercal\Pi = \Pi \Sigma(\bp(\btheta)) \Pi.
\end{align*}
We then need to show that the following equality holds
\begin{align*}
& \Pi \covarJL\left(\bp(\btheta)  \right) \Pi \cdot \Sigma(\bp(\btheta))^{-1} \Pi \nabla \tilde{\bp}(\btheta^{(1)}, \btheta^{(2)}) \\
& \qquad  = \Pi \nabla \tilde{\bp}(\btheta^{(1)}, \btheta^{(2)}) 
\end{align*}
We again use the fact that $\Sigma(\bp(\btheta))$ has an eigenvector that is all 1's. Hence, we have 
$$
\Pi\Sigma(\bp(\btheta)) \Pi = \Sigma(\bp(\btheta)) - \frac{e^{\epsilon/2}}{rc \cdot ( e^{\epsilon/2} + 1)^2} \pmb{1} \pmb{1}^\intercal.
$$
We then have,
\begin{align*}
&  \Pi \covarJL\left(\bp(\btheta)  \right) \Pi \cdot \Sigma(\bp(\btheta))^{-1} \Pi \nabla \tilde{\bp}(\btheta^{(1)}, \btheta^{(2)}) \\
 & \quad  =  \left( \Sigma(\bp(\btheta)) - \frac{e^{\epsilon/2}}{rc \cdot ( e^{\epsilon/2} + 1)^2} \pmb{1} \pmb{1}^\intercal \right) \\
 & \qquad\qquad \cdot \Sigma(\bp(\btheta))^{-1} \Pi \nabla \tilde{\bp}(\btheta^{(1)}, \btheta^{(2)}) \\
 & \quad = \left( I_{rc} - \frac{1}{rc} \one\one^\intercal  \right) \Pi \nabla \tilde{\bp}(\btheta^{(1)}, \btheta^{(2)}) \\
 & \quad = \Pi \Pi \nabla \tilde{\bp}(\btheta^{(1)}, \btheta^{(2)}) = \Pi \nabla \tilde{\bp}(\btheta^{(1)}, \btheta^{(2)}).
\end{align*}
\end{proof}

\begin{algorithm}
\caption{Local DP IND Test: $\BitIND$}
\label{alg:LocalBitIND}
\begin{algorithmic}
\REQUIRE $(\bbx_1,\cdots, \bbx_n)$, $\epsilon$, $\alpha$.
\STATE Let $\tilde{\bH} = \sum_{i=1}^n \Mbit(\bbx_i,\epsilon)$.  
\STATE $ q = \min_{\bpipi{1},\bpipi{2}} \left\{ \bchisqBF{\epsilon; (\bpipi{1},\bpipi{2})} \right\}$ from \eqref{eq:BitFlipind}.
\STATE {\bf if } $q > \chi_{(r-1)(c-1),1-\alpha}^2$
	 Decision $\gets $ Reject.
\STATE {\bf else }
	 Decision $\gets$ Fail to Reject.
\ENSURE Decision
\end{algorithmic}
\end{algorithm}

We present the test in \Cref{alg:LocalBitIND}.  The following result follows from same privacy analysis as before.
\begin{theorem}
$\BitIND$ is $\epsilon$-LDP.  
\end{theorem}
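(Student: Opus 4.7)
The plan is to reduce the claim to the two facts already established earlier in the paper: that the bit flip local randomizer $\Mbit$ is $\epsilon$-differentially private (the theorem stated just after \Cref{alg:localJL}), and that local differential privacy is closed under post-processing. Since $\BitIND$ accesses the dataset $\bbx = (\bbx_1,\dots,\bbx_n)$ only through the per-record calls $\Mbit(\bbx_i,\epsilon)$ that produce $\tilde{\bH}$, and all subsequent computation (the estimators $\tbpipi{1},\tbpipi{2}$, the middle matrix $\tilde M$, the minimization that yields $q$, and the comparison with the chi-square critical value) is a deterministic function of $\tilde{\bH}$, the whole procedure fits the LDP template.

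Concretely, I would argue as follows. First, verify that for each individual $i \in [n]$ the algorithm invokes the LR oracle $LR_{\bbx}(i,\Mbit(\cdot,\epsilon))$ exactly once, with the single randomizer $\Mbit(\cdot,\epsilon)$ which is $\epsilon$-DP by the previously stated theorem for $\Mbit$. This immediately matches the definition of $(\epsilon,0)$-LDP: the sum over invocations $\sum_j \epsilon_j$ per index $i$ equals $\epsilon$, and $\sum_j \delta_j = 0$.

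Second, observe that after the $n$ independent randomized responses are produced, the aggregation $\tilde{\bH} = \sum_{i=1}^n \Mbit(\bbx_i,\epsilon)$, the formation of the minimized quadratic form $q = \min_{\bpipi{1},\bpipi{2}} \bchisqBF{\bpipi{1},\bpipi{2};\epsilon}$, and the final decision rule only depend on the privatized outputs. They do not re-access the raw data through $LR_{\bbx}$, so by the post-processing property of differential privacy no additional privacy loss is incurred.

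The main obstacle is essentially notational rather than conceptual: one must be careful that the minimization step in $\BitIND$ truly is post-processing, i.e.\ that $\tilde M$ is built from the estimators $\tbpipi{1},\tbpipi{2}$ (which are functions of $\tilde{\bH}$) and not from the raw data. Since the algorithm is written so that every quantity entering the test statistic is computed from $\tilde{\bH}$, this is immediate, and the proof amounts to one sentence invoking $\Mbit$'s $\epsilon$-DP guarantee together with post-processing, exactly as in the analogous proofs for $\GaussGOF$, $\ExpGOF$, $\BitGOF$, $\GaussIND$, and $\ExpIND$.
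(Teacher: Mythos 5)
Your proof is correct and matches the paper's own argument: the paper simply notes that the result ``follows from the same privacy analysis as before,'' namely that each individual's record passes once through the $\epsilon$-DP randomizer $\Mbit$ and everything downstream ($\tilde{\bH}$, the estimators, the minimized statistic, the decision) is post-processing of the privatized outputs. Your version spells out the LR-oracle accounting more explicitly than the paper does, but it is the same proof.
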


\subsection{Empirical Results}
As we did for the locally private goodness of fit tests, we empirically compare the power for our various tests for independence.  We consider the null hypothesis that the two sequences of categorical random variables $\{\bU_\ell \}_{\ell = 1}^n$ and $\{\bV_\ell \}_{\ell = 1}^n$ are independent of one another.  Under an alternate hypothesis, we generate the contingency data according to a non-product distribution.  We fix the distribution $\bp^{(1)}$ for the contingency table to be of the following form, where $\bpipi{1} \in \R^r$ is the unknown distribution for $\{\bU_\ell \}_{\ell = 1}^n$, $\bpipi{2} \in \R^c$ is the unknown distribution for $\{\bV_\ell \}_{\ell = 1}^n$, and $r,c$ are even
\begin{align}
& \bp^{(1)} = \bpipi{1} \left( \bpipi{2} \right)^\intercal \nonumber \\
& \quad + \eta (1,-1,\cdots, -1, 1)^\intercal(1,-1,\cdots, -1, 1)
\label{eq:alt_IND}
\end{align}

Note that the hypothesis test does not know the underlying $\bpipi{i}$ for $i \in \{1,2\}$, but to generate the data we must fix these distributions.  We show power results when the marginal distributions satisfy $\bpipi{1} = (1/r, \cdots, 1/r)$ and $\bpipi{2} = (1/c, \cdots, 1/c)$.  In \Cref{fig:power_IND}, we give results for various $n$ and $\epsilon \in \{ 1,2,4\}$ for the case when $(r,c) = (2,2)$ and $\eta = 0.01$ as well as the case when $(r,c) = (10,4)$ and $\eta = 0.005$.  

Note that we only give results for the two tests $\ExpIND$ and $\BitIND$.  

 \begin{figure}[h]
 \begin{subfigure}{.49\textwidth}
   \centering
 \includegraphics[width=.9\textwidth]{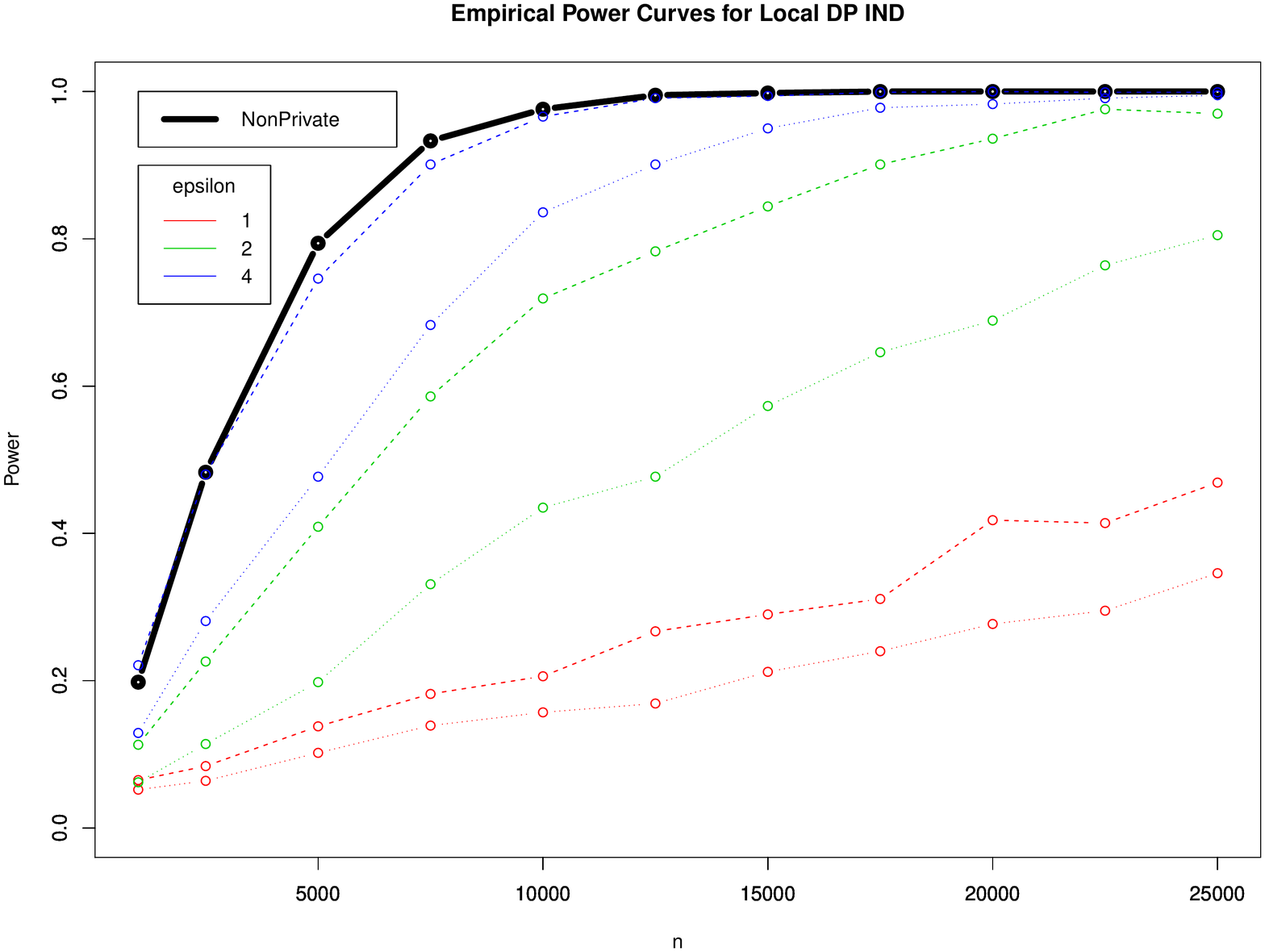}
 \end{subfigure}
 \hfill
 \begin{subfigure}{.49\textwidth}
   \centering
 \includegraphics[width=.9\textwidth]{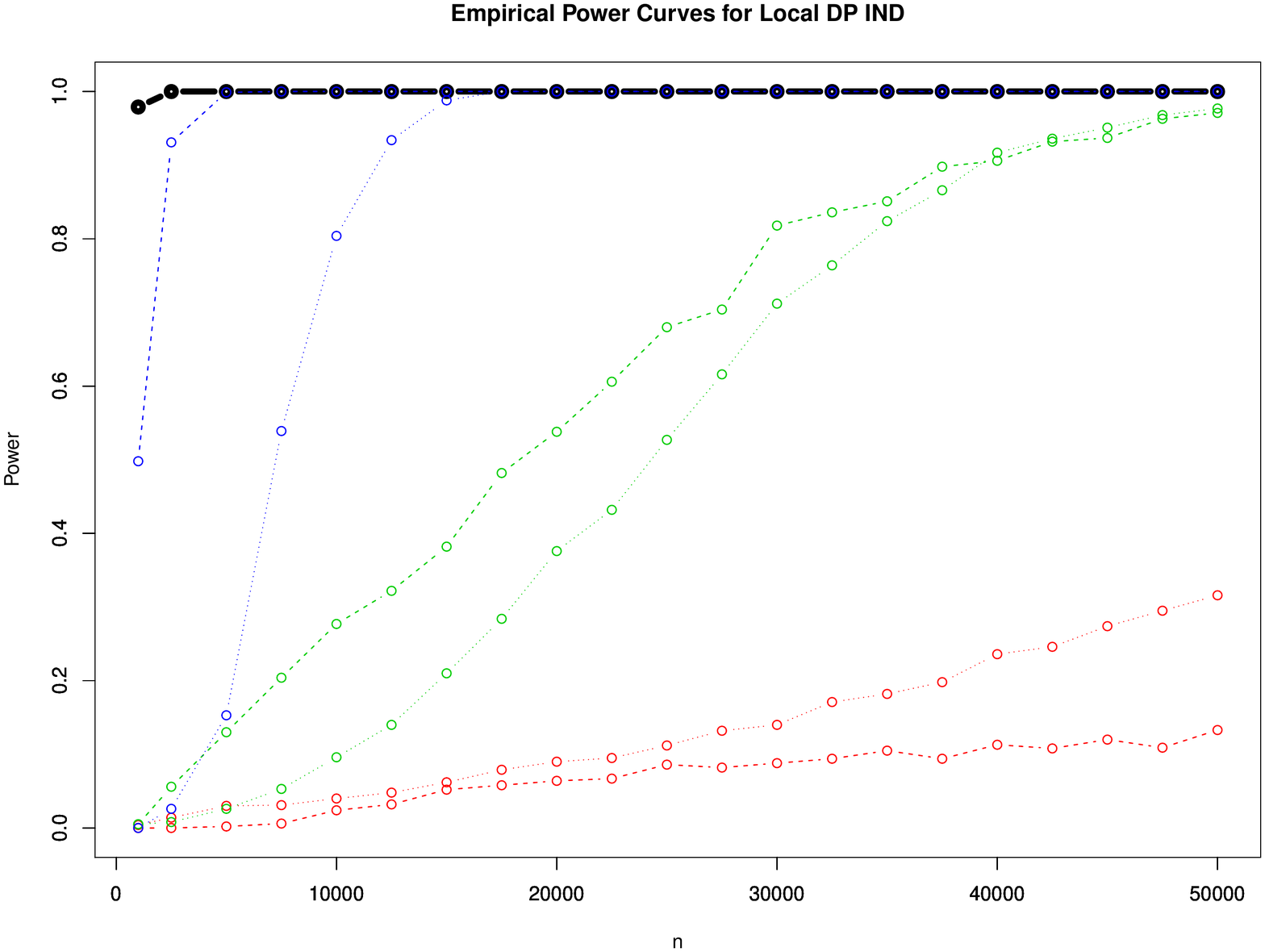}
 \end{subfigure}
 \caption{Comparison of empirical power of classical non-private test versus local private tests $\ExpIND$ (dashed line), and $\BitIND$ (dotted line).  In the left plot we set $(r,c) = (2,2)$ and $\eta = 0.01$ while we set $(r,c) = (10,4)$ and $\eta = 0.005$ in the right plot for the contingency table data distribution given in \eqref{eq:alt_IND}. 
 \label{fig:power_IND}}
 \end{figure}

\section{Conclusion}
We have designed several hypothesis tests, each depending on different local differentially private algorithms: $\GaussGOF$ ($\LapGOF$), $\ExpGOF$, and $\BitGOF$ as well as their corresponding independence tests $\GaussIND$, $\ExpIND$, and $\BitIND$. This required constructing different statistics so that the resulting distribution after injecting noise into the data in order to satisfy privacy could be closely approximated with a chi-square distribution.  Hence, we designed  rules for when a null hypothesis $\nullhyp$ should be rejected while satisfying some bound $\alpha$ on Type I error.  Further, 
We showed that each statistic has a noncentral chi-square distribution when the data is drawn from some alternate hypothesis $\althyp$.  Depending on the form of the alternate probability distribution, the dimension of the data, and the privacy parameter, either $\ExpGOF$ or $\BitGOF$ gave better power.  This corroborates  the results from \citet{KOV14} who showed that in hypothesis testing, different privacy regimes have different optimal local differentially private mechanisms, although utility in their work was in terms of KL divergence.  Our results show that the power of the test is directly related to the noncentral parameter of the test statistic that is used.  This requires the data analyst to carefully consider alternate hypotheses, as well as the data dimension and privacy parameter for a particular test and then see which test statistic results in the largest noncentral parameter.  
We focused primarily on goodness of fit testing, where the null hypothesis is a single probability distribution.  We further developed local private independence tests which resulted in using previous general chi-square theory presented in \cite{KR17}.  This basic framework can be used to develop other chi-square hypothesis tests where the null hypothesis is not a single parameter.  We hope that this will lead to future work on designing local differentially private hypothesis tests beyond chi-square testing.  
 
 \clearpage
 \bibliographystyle{plainnat}
 \bibliography{refs}

\newpage
\appendix
\section{Omitted Proofs}

\begin{proof}[Proof of \Cref{lem:technical}]
This follows from the central limit theorem.  We first compute the expected value
$$
\frac{1}{n}\ex{\tilde{\bH}} = \frac{e^{\epsilon/2}}{e^{\epsilon/2}+1} \bp + \frac{1}{e^{\epsilon/2}+1} (1-\bp).
$$
In order to compute the covariance matrix, we consider the diagonal term $(j,j)$
$$
\ex{(\tilde{H}_j)^2} = \frac{e^{\epsilon/2}}{e^{\epsilon/2}+1} p_j + \frac{1}{e^{\epsilon/2}+1} (1-p_j)
$$
Next we compute the off diagonal term $(j,k)$
\begin{align*}
\ex{\tilde{H}_j \ \tilde{H}_k} & = \left(\frac{e^{\epsilon/2}}{e^{\epsilon/2}+1}\right)^2 \prob{X_j = 1 \ \& \ X_k=1} \\
& \qquad + \frac{e^{\epsilon/2}}{\left(e^{\epsilon/2}+1\right)^2} \left( \prob{X_j = 1 \ \& \ X_k=0} + \prob{X_j = 0 \ \& \ X_k=1}\right)\\
& \qquad + \frac{1}{\left(e^{\epsilon/2}+1\right)^2} \prob{X_j = 0 \ \& \  X_k = 0} \\
& = \left(\frac{1}{e^{\epsilon/2}+1} \right)^2 \left[ e^{\epsilon/2} \left( p_j + p_k\right) + (1-p_j - p_k)\right]\\
\end{align*}
Before we construct the covariance matrix, we simplify a few terms
\begin{align*}
& \ex{(\tilde{H}_j)^2}  - \ex{\tilde{H}_j}^2 \\
&\qquad = \left(\frac{1}{e^{\epsilon/2}+1}  \right)^2 \left[ \left(e^{\epsilon/2} -1\right)\left(e^{\epsilon/2} +1 \right)p_j + 1+e^{\epsilon/2} - \left( \left(e^{\epsilon/2} -1 \right)p_j + 1 \right)^2 \right] \\
& \qquad = \left(\frac{1}{e^{\epsilon/2}+1}  \right)^2 \left[ \left(e^{\epsilon/2} -1\right)\left( e^{\epsilon/2} +1 -2   \right)p_j  - (e^{\epsilon/2} -1)^2\left(p_j\right)^2 + e^{\epsilon/2} \right] \\
& \qquad = \frac{1}{\left( e^{\epsilon/2} + 1\right)^2}\left[  \left(e^{\epsilon/2} -1\right)^2 p_j\left(1-p_j \right) + e^{\epsilon/2}\right]
\end{align*}

Further, we have
\begin{align*}
& \ex{\tilde{H}_j \tilde{H}_k}  - \ex{\tilde{H}_j}\ex{\tilde{H}_k} \\
& \qquad = \left(\frac{1}{e^{\epsilon/2}+1} \right)^2 \left[ \left(e^{\epsilon/2}-1\right) \left( p_j + p_k\right) + 1 - \left( (e^{\epsilon/2}-1) p_j + 1 \right) \left( (e^{\epsilon/2}-1) p_k + 1 \right)\right]  \\
& \qquad = -\left( \frac{e^{\epsilon/2} -1 }{e^{\epsilon/2} + 1} \right)^2 p_j p_k
\end{align*}

Putting this together, the covariance matrix can then be written as 
\begin{align*}
\covarJL(\bp) &= \ex{\tilde{\bH}\left(\tilde{\bH} \right)^\transpose} - \ex{\tilde{\bH}}\left(\ex{\tilde{\bH}} \right)^\transpose \\
& =\left(\frac{e^{\epsilon/2}-1}{e^{\epsilon/2}+1} \right)^2 \left[ \diag\left( \bp \right) - \bp\left( \bp \right)^\transpose \right] +\frac{e^{\epsilon/2}}{\left(e^{\epsilon/2}+1\right)^2} I_d
\end{align*}
\end{proof}

\end{document}